\newtheorem{thm}{Theorem}[section]
\newtheorem{lem}[thm]{Lemma}
\newtheorem{cor}[thm]{Corollary}
\newtheorem{prop}[thm]{Proposition}
\def\square{\vbox{
      \hrule height 0.4pt
      \hbox{\vrule width 0.4pt height 5.5pt \kern 5.5pt \vrule width 0.4pt}
      \hrule height 0.4pt}}
\def\id{\mathrm{id}}
\def\ch\mathrm{ch}
\def\sk{\mathrm{sk}}
\def\RP{\mathbb{R}\mathrm{P}}
\def\CP{\mathbb{C}\mathrm{P}}
\long\def\symbolfootnote[#1]#2{\begingroup%
\def\thefootnote{\fnsymbol{footnote}}\footnote[#1]{#2}\endgroup}
\newcommand{\Z}{\mathbb{Z}}
\newcommand{\R}{\ensuremath{\mathbb{R}}}
\numberwithin{equation}{section}
\title{On the metastable homotopy of mod $2$ Moore spaces}
\author[R.~Mikhailov]{Roman~Mikhailov}
\address{Chebyshev Laboratory, St. Petersburg State University, 14th Line, 29b,
Saint Petersburg, 199178 Russia}
\address{St. Petersburg Department of Steklov Mathematical Institute
}
\email{rmikhailov@mail.ru}
\author{J. Wu }
\address{Department of Mathematics, National University of Singapore, 10 Lower Kent Ridge Road, Singapore 119076} \email{matwuj@nus.edu.sg}
\urladdr{www.math.nus.edu.sg/\~{}matwujie}
\thanks{The main result (Theorem~\ref{theorem1.1}) is supported by Russian Scientific Foundation, grant N 14-21-00035. The last author is also partially supported by the Singapore Ministry of Education research grant (AcRF Tier 1 WBS No. R-146-000-190-112)
and a grant (No. 11329101) of NSFC of China.}
\begin{document}

\begin{abstract}
In this article, we study the exponents of metastable homotopy of
mod $2$ Moore spaces. Our result gives that the double loop space
of $4n$-dimensional mod $2$ Moore spaces has a multiplicative
exponent $4$ below the range of $4$ times the connectivity. As a
consequence, the homotopy groups of $4n$-dimensional mod $2$ Moore
spaces have an exponent $4$ below the range of $4$ times the
connectivity.

\end{abstract}

\maketitle


\section{Introduction}

The metastable homotopy has been studied by various people with fruitful results~\cite{Barratt54, Barratt-Mahowald, Baues, GH,James, Mahowald, Mahowald1, Morisugi, Tipple} since the early 1950s. The descriptions on the lower metastable homotopy groups of the Moore spaces given by M. G. Barratt~\cite{Barratt54} in 1954 leaded to computational results announced in~\cite{Tipple}. In this article, we consider the exponents of the metastable homotopy  groups of mod $2$ Moore spaces.

Let $P^n(2)=\Sigma^{n-2}\RP^2$ be the $n$-dimensional mod $2$ Moore space with $n\geq3$. It is well known that $P^n(2)$ has a suspension exponent of $4$, that is, the degree $4$ map $[4]\colon P^n(2)\to P^n(2)$ is null homotopic. By the classical result of M. G. Barratt~\cite{Barratt60}, the metastable homotopy of $P^n(2)$ has an exponent dividing $8$. This leads to a natural question whether  the metastable homotopy of $P^n(2)$ has an exponent of $4$. The answer to this question is negative for the cases $n\equiv 2, 3\mod{4}$ because there are $\Z/8$-summands occurring in the lower metastable homotopy groups according to~\cite{Cohen-Wu, Mukai, Tipple}. The purpose of this article is to give an affirmed answer to the above question for the case $n\equiv0\mod{4}$ with $n>5$.

Our answer to the question is actually given by showing that  the double loop space $\Omega^2P^n(2)$ has a multiplicative exponent $4$ below the range roughly $4$ times the connectivity in the case $n\equiv0\mod{4}$ with $n>5$. More explicitly, our main result is as follows.

\begin{thm}\label{theorem1.1}
Let $n\equiv0\mod{4}$ with $n>4$. Then the power map $4\colon \Omega^2P^{n}(2)\to \Omega^2P^{n}(2)$ restricted to the skeleton $\sk_{4n-9}( \Omega^2P^{n}(2))$ is null homotopic.
\end{thm}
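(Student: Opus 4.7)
The plan is to analyze the $4$-power map on $\Omega^2 P^n(2)$ via a James/May-type filtration on the double loop space of a double suspension, and then to control the few filtration quotients visible on $\sk_{4n-9}$ by means of the suspension exponent of the Moore space combined with a standard power-map expansion.

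First I would exploit the identification $P^n(2) = \Sigma^2 P^{n-2}(2)$ and a combinatorial model for $\Omega^2 \Sigma^2 Y$ (e.g.\ the little-$2$-cubes construction), which carries a functorial filtration $F_k$ whose successive quotient $F_k/F_{k-1}$ is built functorially from the $k$-th smash power $Y^{\wedge k}$ with $Y = P^{n-2}(2)$. Since the bottom cell of $Y^{\wedge k}$ sits in dimension $k(n-3)$, the skeleton $\sk_{4n-9}(\Omega^2 P^n(2))$ is contained in $F_4$ as soon as $n > 5$, so only four explicit subquotients contribute. On the $H$-space $\Omega^2 \Sigma^2 Y$, the $k$-power map admits a Hopf-invariant expansion
\[
[4] \simeq 4 \cdot \mathrm{id} + \binom{4}{2} h_2 + \binom{4}{3} h_3 + h_4 + (\text{higher commutators}),
\]
with each $h_j$ factoring through $F_j/F_{j-1}$; the commutator terms live in filtration $\geq 5$ and vanish on $\sk_{4n-9}$ for dimensional reasons.

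Next I would kill the visible terms one at a time. The contribution $4 \cdot \mathrm{id}$ is the $\Omega^2$ of the degree-$4$ self-map of $P^n(2)$, and is null-homotopic by the suspension exponent of $P^n(2)$. The coefficients $\binom{4}{2} = 6$ and $\binom{4}{3} = 4$ are each even, and the classical factorization $2 \cdot \mathrm{id}_{P^{n-2}(2)} \simeq i\, \eta\, q$ (where $q$ is the top-cell pinch and $i$ the bottom-cell inclusion, so that $q \circ i = 0$) converts these even multiples into compositions whose nonzero pieces are pushed beyond $F_4$ and hence above dimension $4n-9$.

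The real obstacle is the term $h_4$, whose coefficient is $1$; this is where the hypothesis $n \equiv 0 \pmod{4}$ is essential. I would pin down the image of $h_4$ on $\sk_{4n-9}$ by a cellular analysis of the functorial model for $F_4/F_3$, identifying the potential obstruction classes in dimensions $\leq 4n-9$ and computing their orders via the action of $\eta$. The cases $n \equiv 2, 3 \pmod{4}$ are known to produce the $\Z/8$-summands reported in~\cite{Cohen-Wu, Mukai, Tipple}; in the case $n \equiv 0 \pmod{4}$ the analogous calculation should show that the relevant candidate obstructions are either $2$-divisible (hence absorbed by the previous step) or shifted to dimensions above $4n-9$. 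This parity-sensitive cellular computation in the fourth filtration stage, including a careful treatment of $\eta$-extensions and of the symmetric-group action on $P^{n-2}(2)^{\wedge 4}$, is in my view the principal technical difficulty.
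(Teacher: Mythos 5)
Your starting frame---filter the double loop space by a little-cubes/configuration filtration, expand the $4$-th power by Hopf invariants, and kill the terms using the suspension exponent and the factorization $[2]\simeq i\circ\eta\circ q$---matches the paper's Section 2 (its Obstruction Lemma computes $\alpha_3^4=([x_1,x_2]^2[x_1,x_3]^2[x_2,x_3]^2)\cdot[[x_1,x_3],x_2]^2$ in the Cohen group, which is your expansion with the coefficient-$4$ terms already dead). But there is a critical gap where you declare the even-coefficient quadratic term routine. You assert that writing $6\equiv 2$ as $i\,\eta\, q$ pushes ``the nonzero pieces beyond $F_4$ and hence above dimension $4n-9$.'' That is false: $\eta\wedge\id$ \emph{lowers} dimension, and the resulting composite $\Sigma D_2\to P^{n-1}(2)^{\wedge 2}\to P^{2n-2}(2)\xrightarrow{\eta\wedge\id}P^{2n-3}(2)\xrightarrow{S_2|}\Omega P^{n}(2)$ sits squarely inside the stated range. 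This term---twice the quadratic Samelson/Whitehead product---is the entire content of the theorem: it is exactly the source of the $\Z/8$-summands when $n\equiv 2,3\pmod 4$, so nothing formal can kill it. The paper devotes Sections 3--5 to it: a cell-by-cell analysis of the reduced evaluation map $\bar\sigma_2$ on $\Sigma D_2(\R^2;P^{n-1}(2))$ (which forces a mod $8$ Moore space $P^{2n-1}(8)$ and the complex $\Sigma^{2n-7}\CP^2\wedge\RP^2$ into the picture), the Cohen--Wu fact that the relevant class in $\pi_{2n}(P^{n+1}(2))$ has order exactly $4$ when $n+1\equiv 0\pmod 4$, and---the key new input---Lemma 4.3, built from Mahowald's theorem $[\iota_{4n-1},\eta_{4n-1}]=0$, producing a map $\delta_{4n}\colon P^{8n-2}(2)\to P^{4n}(2)$ extending the Whitehead square with $\delta_{4n}\circ(\eta\wedge\id)$ null homotopic. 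None of this is in your outline, and the hypothesis $n\equiv 0\pmod 4$ enters exactly here, not in the fourth filtration stage.

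Two further corrections. First, the cubic obstruction is the weight-$3$ commutator $[[x_1,x_3],x_2]^2$, which lives in filtration $3$, not ``$\geq 5$''; it is not killed by dimension but by the shuffle-relation argument after looping (Proposition 2.2 of the paper), a step you would still need. Second, your Hopf-invariant expansion of the power map is an identity on $\Omega\Sigma Y$, not directly on $\Omega^2\Sigma^2 Y$; the paper works on $J_3(\Sigma X)\subset\Omega\Sigma^2 X$ and then loops, and in its range (Theorem 2.3 confines the skeleton to configuration length $\leq 3$) there is no weight-$4$ term at all, so the object you single out as ``the real obstacle'' does not arise in the paper's argument. Your instinct that a parity-sensitive $\eta$-computation is the crux is right, but you have located it in the wrong filtration and waved away the step that actually requires it.
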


As a consequence, we get an answer on the exponents of the homotopy groups up to the range, where for the first case $n=4$, the homotopy groups $\pi_k(P^4(2))$ are known up to the range~\cite{Wu}.

\begin{cor}
Let $n\equiv0\mod{4}$. Then $4\cdot \pi_k(P^n(2))=0$ for $k\leq 4n-7$.\hfill $\Box$
\end{cor}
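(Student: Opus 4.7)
The plan is to read off the Corollary as an immediate consequence of Theorem~\ref{theorem1.1} via the loop-space adjunction $\pi_k(P^n(2)) \cong \pi_{k-2}(\Omega^2 P^n(2))$ together with cellular approximation. First I fix $n \equiv 0 \pmod{4}$ with $n > 4$ and $k \leq 4n-7$, so that $k-2 \leq 4n-9$. An element $\alpha \in \pi_{k-2}(\Omega^2 P^n(2))$ is represented by a map $f \colon S^{k-2} \to \Omega^2 P^n(2)$, which after cellular approximation may be taken to land in $\sk_{k-2}(\Omega^2 P^n(2)) \subseteq \sk_{4n-9}(\Omega^2 P^n(2))$.

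The class $4\alpha$ is then represented by the composite of $f$ with the power map $4 \colon \Omega^2 P^n(2) \to \Omega^2 P^n(2)$. By Theorem~\ref{theorem1.1}, this power map is null homotopic when restricted to $\sk_{4n-9}(\Omega^2 P^n(2))$, so the composite is null homotopic and $4\alpha = 0$. This delivers $4 \cdot \pi_k(P^n(2)) = 0$ for all $k \leq 4n-7$ in every case $n \equiv 0 \pmod{4}$ with $n > 4$.

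The remaining case $n = 4$ lies outside the hypothesis of Theorem~\ref{theorem1.1} and must be handled by a separate route. Here my plan is to invoke the known tabulation of $\pi_k(P^4(2))$ in the relevant range from \cite{Wu}, and verify by direct inspection of the listed groups that each is annihilated by $4$. I do not anticipate any serious obstacle in either regime: the substantive work is entirely packaged into Theorem~\ref{theorem1.1}, and the Corollary reduces to the standard observation that a map out of a sphere may be compressed into a low-dimensional skeleton, together with the small bit of bookkeeping needed to dispose of $n=4$ via the reference.
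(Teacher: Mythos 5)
Your proposal is correct and matches the paper's (implicit) argument: the corollary is stated with no written proof precisely because it follows from Theorem~\ref{theorem1.1} by cellular approximation and the adjunction $\pi_k(P^n(2))\cong\pi_{k-2}(\Omega^2P^n(2))$, with the case $n=4$ disposed of by the known computations of $\pi_k(P^4(2))$ in~\cite{Wu}, exactly as you outline. The only point worth making explicit is that the $H$-space power map $4$ on the double loop space induces multiplication by $4$ on homotopy groups, which is standard.
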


We should point out that it is unknown whether the homotopy groups of mod $2$ Moore spaces have a bounded exponent. It is known that there are infinitely many $\Z/8$-summands (in different dimensions) occurring in the homotopy groups of mod $2$ Moore spaces~\cite{Cohen-Wu}. Our result shows that the first $\Z/8$-torsion should occur in the range at least $4$ times the connectivity in the case $n\equiv0\mod{4}$.

The methodology for proving Theorem~\ref{theorem1.1} is briefly described as follows. We begin to use the Cohen groups for displaying the explicit obstructions to the $4$-th power map on the single loop space $\Omega P^n(2)$. By using the shuffle relations and the Hopf invariants on general configuration spaces, the $4$-th power on the double loop space $\Omega^2P^n(2)$ up to the range is decomposed as a composite involving the Whitehead product. After handling the reduced evaluation map, Theorem~\ref{theorem1.1} is then proved in the special case when the Whitehead square $\omega_{n-1}$ is divisible by $2$ by Theorem~\ref{theorem3.6}. With having more lemmas on the Whitehead product on $P^{4n}(2)$, Theorem~\ref{theorem1.1} is finally proved by Theorem~\ref{theorem5.3}. Here, the key lemma (Lemma~\ref{lemma4.3}) is a special property for $4n$-dimensional mod $2$ Moore spaces, which is hinted by Mark Mahowald's result~\cite{Mahowald0} that $[\iota_{4n-1}, \eta_{4n-1}]=0$.

The article is organized as follows. In section~\ref{section2}, we discuss the $4$-th power map on the single loop spaces and and double loop spaces. The reduced evaluation map on mod $2$ Moore spaces is studied in section~\ref{section3}, where Theorem~\ref{theorem3.6} is the special case of Theorem~\ref{theorem1.1} in the case when the Whitehead square is divisible by $2$. We give some lemmas in section~\ref{section4}. Theorem~\ref{theorem1.1} is proved in section~\ref{section5}, where Theorem~\ref{theorem5.3} is Theorem~\ref{theorem1.1}.

\section{The $4$-th power map on looped suspensions}\label{section2}

In this section, we display the obstructions to the $4$-th power map on $\Omega \Sigma^2 X$ and $\Omega^2\Sigma^2 X$ below four times the connectivity for spaces $\Sigma^2X$ having suspension exponent $4$.

\subsection{The obstructions to the $4$-th power map on $\Omega\Sigma^2 X$}
We use the Cohen groups~\cite{Cohen2} for computing the obstructions to the $4$-th power map. Recall that the Cohen group $K^{\Z/4}_n=K^{\Z/4}_n(x_1,\ldots,x_n)$ is combinatorially defined by the generators given by the letters $x_1,\ldots,x_n$ with following relations:
\begin{enumerate}
\item the iterated commutators $[[x_{i_1},x_{i_2}],\ldots,x_{i_t}]=1$ if $p=q$ for some $1\leq p<q\leq n$, where the commutator $[a,b]=a^{-1}b^{-1}ab$, and
\item $x_i^4=1$ for $1\leq i\leq n$.
\end{enumerate}
Let $d_i\colon K_n^{\Z/4}\to K_{n-1}^{\Z/4}$ be the group homomorphism such that $d_i(x_j)=x_j$ for $j<i$, $d_i(x_i)=1$ and $d_i(x_j)=x_{j-1}$ for $j>i$. The Cohen group $H_n^{\Z/4}$ is defined as the equalizer of $d_i\colon K_n^{\Z/4}\to K_{n-1}^{|Z/4}$ for $1\leq i\leq n$. Namely, $H_n^{\Z/4}$ is the subgroup of $K^{\Z/4}_n$ consisting of the words $w\in K_n^{\Z/4}$ with the property that $d_i(w)=d_1(w)$ for $1\leq i\leq n$. For the spaces $\Sigma^2X$ satisfying the following hypothesis:
\begin{equation}\label{equation2.1}
\textrm{ the identity map } \id_{\Sigma^2X} \textrm{ is of order } 4 \textrm{ in } [\Sigma^2X, \Sigma^2X],
\end{equation}
there is a commutative diagram of groups
\begin{equation}\label{equation2.2}
\begin{diagram}
K_n^{\Z/4}&\rTo^{e_X}&[(\Sigma X)^{\times n}, \Omega\Sigma^2X]\\
\uInto&&\uInto>{q_n^{*}}\\
H_n^{\Z/4}&\rTo^{e_X}&[J_n(\Sigma X), \Omega \Sigma^2X],\\
\end{diagram}
\end{equation}
where $J(Y)$ is the James construction with the James filtration $J_n(Y)$, the monomorphism in the right column is induced by the quotient map $q_n\colon (\Sigma X)^{\times n}\to J_n(\Sigma X)$ and the group homomorphism $e_X$ sends the letter $x_i$ to the homotopy class of the composite
$$
(\Sigma X)^{\times n} \rTo^{\pi_i} \Sigma X\rInto \Omega \Sigma^2X
$$
with $\pi_i$ the $i$-th coordinate projection for $1\leq i\leq n$. Let $\alpha_n=x_1x_2\cdots x_n\in H_n^{\Z/4}\leq K_n^{\Z/4}$. Then $e_X(\alpha_n)$ is the homotopy class of the inclusion map $J_n(\Sigma X)\to \Omega\Sigma^2X$.

We are only interested in the range below four times the connectivity. It is sufficient to only consider $\alpha_3^4$, which can be done by direct computations through the Magnus-type representation of $K^{\Z/4}_n$ into the non-commutative exterior algebra $A_n^{\Z/4}$. Here, $A_n^{\Z/4}=A_n^{\Z/4}(y_1,\ldots,y_n)$ is the quotient algebra of the tensor $T(y_1,\ldots,y_n)$ over $\Z/4$ subject to the relations
\begin{equation}\label{equation2.3}
y_{i_1}y_{i_2}\cdots y_{i_t}=0\quad \textrm{ if $p=q$ for some $1\leq p<q\leq t$.}
\end{equation}
 The representation $e\colon K_n^{\Z/4}\to A_n^{\Z/4}$ is given by $e(x_i)=1+y_i$. It is proved in~\cite{Cohen2} that this is a faithful representation of $K_n^{\Z/4}$.

Now $e(\alpha_3^4)=((1+y_1)(1+y_2)(1+y_3))^4$ in $A_3^{\Z/4}$. Note that
$$
(1+y_1)(1+y_2)(1+y_3)=1+\sigma_1+\sigma_2+\sigma_3,
$$
where $\sigma_1=y_1+y_2+y_3$, $\sigma_2=y_1y_2+y_1y_3+y_2y_3$ and $\sigma_3=y_1y_2y_3$. Let $\Delta=\sigma_1+\sigma_2+\sigma_3$. Then
$$
e(\alpha_3^4)=(1+\Delta)^4=1+4\Delta+6\Delta^2+4\Delta^3+\Delta^4=1+2\Delta^2
$$
in $A_3^{\Z/4}$ because $4\alpha=0$ for $\alpha\in A^{\Z/4}_n$ and $\Delta^4\in I^4A_3^{\Z/4}=0$, the $4$-fold product of the augmentation ideal $IA_3^{\Z/4}$. By using the property that $I^4A_3^{\Z/4}=0$, we have $\Delta^2=\sigma_1^2+\sigma_2\sigma_1+\sigma_1\sigma_2$. With taking the notation $[\alpha,\beta]=\alpha\beta-\beta\alpha$ for $\alpha,\beta$ in an algebra $A$ and using the relations~(\ref{equation2.3}), we have
$$
\begin{array}{rcl}
2\sigma_1^2&=&2(y_1+y_2+y_3)^2\\
&=&2(y_2y_1+y_3y_1+y_1y_2+y_3y_2+y_1y_3+y_2y_3)\\
&=&2([y_1,y_2]+[y_1,y_3]+[y_2,y_3]),\\
\end{array}
$$
$$
\begin{array}{rcl}
2(\sigma_1\sigma_2+\sigma_2\sigma_1)&=&2(2y_1y_2y_3+y_2y_3y_1+y_1y_3y_2+y_2y_1y_3+y_3y_1y_2)\\
&=&2(y_2y_3y_1+y_1y_3y_2+y_2y_1y_3+y_3y_1y_2)\\
&=&2(y_2([y_1,y_3])+[y_1,y_3]y_2)\\
&=&2[[y_1,y_3],y_2].\\
\end{array}
$$
(\textbf{Note.} Since we are working by modulo $4$, the sign $\pm$ on the terms can be ignored after multiplying by $2$.)
Now, by using the property~\cite[Lemma 1.4.8]{Wu2} that $e([[x_{i_1},x_{i_2}],\ldots,x_{i_t}])=1+[[y_{i_1},y_{i_2}],\ldots,y_{i_t}]$, we have
\begin{equation}\label{equation2.4}
\alpha_3^4=([x_1,x_2]^2[x_1,x_3]^2[x_2,x_3]^2)\cdot [[x_1,x_3],x_2]^2.
\end{equation}
We give a remark that the above method is valid for computing $\alpha_n^4$ for small $n$, more effective method for determining $\alpha_n^4$ for general $n$ can be seen in~\cite{MW_moore}. The geometric interpretation of formula~(\ref{equation2.4}) through the representation $e_X$ gives the following lemma.

\begin{lem}[Obstruction Lemma]\label{lemma2.1}
Let $X$ be a $CW$-complex such that $4\cdot [\id_{\Sigma^2X}]=0$ in $[\Sigma^2X,\Sigma^2X]$. Let $4|_{J_3}\colon J_3(\Sigma X)\to \Omega\Sigma^2X$ be the restriction of the power map $4\colon J(\Sigma X)\simeq \Omega\Sigma^2X\to \Omega\Sigma^2X$. Then there is a decomposition
$$
[4|_{J_3}]=\zeta_2\cdot \zeta_3
$$
in the group $[J_3(\Sigma X),\Omega\Sigma^2X]$, where $\zeta_2$ is represented by the composite
$$
J_3(\Sigma X)\rInto J(\Sigma X)\rTo^{H_2} J((\Sigma X)^{\wedge 2})\rTo^{\Omega W_2^2} \Omega \Sigma^2 X
$$
with $H_k$ the $k$-th James-Hopf invariant and $W_k$ the $k$-fold Whitehead product, and $\zeta_3$ is represented by the composite
$$
J_3(\Sigma X)\rTo^{\mathrm{pinch}} (\Sigma X)^{\wedge 3}\rTo^{\tau_{2,3}} (\Sigma X)^{\wedge 3}\rTo^{W_3^2}\Omega\Sigma^2X.
$$
with $\tau_{2,3}$ the map switching positions $2$ and $3$ in the self-smash product.\hfill $\Box$
\end{lem}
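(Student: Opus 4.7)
The plan is to translate the algebraic identity (\ref{equation2.4}) into a geometric decomposition via the representation $e_X$ and diagram (\ref{equation2.2}). First, I would observe that since $e_X(\alpha_3)$ is the homotopy class of the inclusion $\iota_3\colon J_3(\Sigma X)\hookrightarrow \Omega\Sigma^2 X$, the fourth power of this class in the group $[J_3(\Sigma X),\Omega\Sigma^2 X]$ is exactly $[4|_{J_3}]$. Combined with (\ref{equation2.4}), this reduces the lemma to computing $e_X$ on the two factors $c_2:=[x_1,x_2]^2[x_1,x_3]^2[x_2,x_3]^2$ and $c_3:=[[x_1,x_3],x_2]^2$. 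A small preliminary step is to verify that $c_2,c_3\in H_3^{\Z/4}$ so that they correspond to maps from $J_3(\Sigma X)$; this amounts to checking that each commutator collapses to $1$ under every face operator $d_i$, so that the condition $d_i(w)=d_1(w)$ is automatic.

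Next, I would identify $e_X(c_2)$ with $\zeta_2$. The James-Hopf invariant $H_2\colon J(\Sigma X)\to J((\Sigma X)^{\wedge 2})$ encodes the second Snaith factor, and its composite with $\Omega W_2$ is well-known to correspond at the Cohen-group level to the sum of pairwise commutators $[x_i,x_j]$. Doubling the Whitehead product, i.e.\ replacing $W_2$ by $W_2^2$, squares each of these commutators and yields the product $[x_1,x_2]^2[x_1,x_3]^2[x_2,x_3]^2$, which is precisely $c_2$.

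Then I would identify $e_X(c_3)$ with $\zeta_3$. Using the identity $e([[x_{i_1},x_{i_2}],\ldots,x_{i_t}])=1+[[y_{i_1},y_{i_2}],\ldots,y_{i_t}]$ recalled in the excerpt, the triple commutator $[[x_1,x_3],x_2]$ corresponds geometrically to the $3$-fold Whitehead product $W_3\colon (\Sigma X)^{\wedge 3}\to \Omega\Sigma^2 X$ precomposed with the pinch map $J_3(\Sigma X)\to (\Sigma X)^{\wedge 3}$; the ordering $(1,3,2)$ of the letters accounts for the twist $\tau_{2,3}$ of the second and third smash factors, and squaring the commutator upgrades $W_3$ to $W_3^2$. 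Substituting these two geometric identifications back into $e_X$ of (\ref{equation2.4}) gives $[4|_{J_3}]=\zeta_2\cdot\zeta_3$ in $[J_3(\Sigma X),\Omega\Sigma^2 X]$.

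The step I expect to be most delicate is the bookkeeping for the nested commutator: one must confirm that the particular bracketing $[[x_1,x_3],x_2]$, as opposed to $[[x_1,x_2],x_3]$ or $[[x_2,x_3],x_1]$, really translates into the twist $\tau_{2,3}$ rather than some other permutation or sum of permutations (modulo lower-filtration corrections that vanish in $I^4A_3^{\Z/4}=0$). This will rely on the faithfulness of the Magnus representation $e$ together with a careful matching of the indexing convention used for $W_3$ in the target with the order of the $y_i$'s produced by expanding the iterated bracket in $A_3^{\Z/4}$. Once that alignment is pinned down, the rest of the argument is a direct substitution of (\ref{equation2.4}) through the commutative diagram (\ref{equation2.2}).
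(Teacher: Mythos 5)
Your proposal is correct and follows essentially the same route as the paper, which gives no separate proof beyond the remark that Lemma~\ref{lemma2.1} is the geometric interpretation of formula~(\ref{equation2.4}) through $e_X$ and diagram~(\ref{equation2.2}) --- precisely the translation you carry out (quadratic factor $\leftrightarrow$ $\Omega W_2^2\circ H_2$, cubic factor $\leftrightarrow$ $W_3^2\circ\tau_{2,3}\circ\mathrm{pinch}$). One small inaccuracy in your preliminary step: the factors $c_2$ and $c_3$ lie in $H_3^{\Z/4}$ not because every commutator is killed by every face operator (e.g.\ $d_1([x_2,x_3]^2)=[x_1,x_2]^2\neq 1$), but because $d_1,d_2,d_3$ all send each factor to the same element of $K_2^{\Z/4}$.
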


\subsection{The elimination of the obstruction $\zeta_3$.} Consider the looping homomorphism
$$
\Omega\colon [J_3(\Sigma X), \Omega \Sigma^2X]\longrightarrow [\Omega J_3(\Sigma X),\Omega^2\Sigma^2 X],\quad [f]\mapsto [\Omega f].
$$
The obstruction $\zeta_3$ can be always eliminated after looping using the shuffle relations introduced in~\cite{Wu2}. Here we give a proof by highlighting the ideas of the shuffle relations.

\begin{prop}\label{proposition2.2}
The element $\zeta_3$ lies in the kernel of the looping homomorphism defined as above. Thus, for any space $Z$ and any map $f\colon \Sigma Z\to J_3(\Sigma X)$, the composite $\zeta_3\circ f\colon \Sigma Z\to \Omega \Sigma^2X$ is null homotopic.
\end{prop}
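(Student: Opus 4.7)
The plan is to prove the stronger statement that $\Omega\zeta_3\simeq\ast$; the assertion about composites $\zeta_3\circ f$ is then immediate from adjunction. I would exploit the fact that the squared triple Whitehead product loops to twice a triple Samelson bracket, which can be killed using the shuffle relations of~\cite{Wu2}.

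First, since $(\Sigma X)^{\wedge 3}$ is a suspension, the group $[(\Sigma X)^{\wedge 3},\Omega\Sigma^2 X]$ is abelian, and hence the squared triple Whitehead product satisfies $W_3^2=2\cdot[[\iota,\iota],\iota]$. Looping carries a Whitehead bracket to a Samelson bracket, so $\Omega\zeta_3$ represents twice the triple Samelson bracket $\langle\langle-,-\rangle,-\rangle$ precomposed with $\Omega\tau_{2,3}\circ\Omega\pinch$, viewed as a map $\Omega J_3(\Sigma X)\to\Omega^2\Sigma^2 X$.

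Next, in the abelian group $[\Omega J_3(\Sigma X),\Omega^2\Sigma^2 X]$ the Samelson bracket satisfies graded skew-symmetry and the Jacobi identity
$$
\langle\langle a,b\rangle,c\rangle+\langle\langle b,c\rangle,a\rangle+\langle\langle c,a\rangle,b\rangle=0.
$$
Combining these with the factor $2$ obtained in the previous paragraph, the expression $2\langle\langle a,c\rangle,b\rangle$ can be rewritten as a sum over a shuffle of a singleton with a pair, parametrized by the $S_3$-orbit on $(\Sigma X)^{\wedge 3}$; this is exactly the shape for which the shuffle relations of~\cite{Wu2} declare the looped map null, yielding $\Omega\zeta_3\simeq\ast$.

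The main obstacle is lining up the bookkeeping in the last step: identifying the combination $2\langle\langle a,c\rangle,b\rangle$, with its specific ordering produced by $\tau_{2,3}$, as a shuffle sum that Wu's relation explicitly kills. The coefficient $2$---which is exactly what makes $\zeta_3$ eliminable in the first place---is essential, since it is needed to turn the lone triple bracket into a sum of three brackets that Jacobi plus skew-symmetry collapse to zero.
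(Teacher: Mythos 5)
Your overall strategy --- rewrite $\zeta_3$ as a shuffle-type sum of iterated brackets and then invoke the fact that such shuffle sums die after looping --- is the same mechanism the paper uses, and the geometric heart of the argument (the shuffle relations of~\cite{Wu2}, i.e.\ that the shuffle composite factors through the reduced diagonal $\bar\Delta\colon J(\Sigma X)\to J(\Sigma X)\wedge J(\Sigma X)$, whose loop is null homotopic) is correctly identified. Treating that input as a black box is fine; the paper in fact gives a short self-contained proof of it via the filtration $\mathrm{Fil}_n(J(Y)\wedge J(Y))$.

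The gap is in the algebraic bookkeeping, and it is the step you yourself flag as the main obstacle. You need the geometric $(2,1)$-shuffle composite to equal the order-preserving sum $\langle\langle a,b\rangle,c\rangle+\langle\langle a,c\rangle,b\rangle+\langle\langle b,c\rangle,a\rangle$, which Jacobi converts to $2\langle\langle a,c\rangle,b\rangle$. The paper's computation, carried out in the combinatorial Cohen group $K_3^{\Z/4}$ where only commutator antisymmetry is used (no Jacobi identity), gives a different answer: the shuffle composite represents $[[x_1,x_2],x_3]+[[x_2,x_1],x_3]+[[x_3,x_1],x_2]=-[[x_1,x_3],x_2]$, a \emph{single} bracket, because the permutations of smash factors of suspensions flip one of your order-preserving shuffles into its inverse. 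Since all groups in sight have exponent $4$, the coefficients $-1$ and $2$ are genuinely different, so your closing assertion that ``the coefficient $2$ is essential'' is false: the unsquared bracket $[[x_1,x_3],x_2]$ already dies after looping, and $\zeta_3$ dies a fortiori (the coefficient $2$ is what matters for the other obstruction, $\zeta_2$, not for $\zeta_3$). Two further cautions: the graded skew-symmetry and Jacobi identity you invoke are stated for homotopy classes of spheres, whereas here the brackets are external Samelson products of maps out of smash powers of the two-cell complex $\Sigma X=P^{n}(2)$, so the ``degree'' signs are not even well defined --- this is exactly what the Cohen-group formalism is designed to circumvent; and the sentence claiming the three-bracket sum ``collapses to zero by Jacobi plus skew-symmetry'' cannot be what you mean, since if pure algebra killed it there would be nothing left to loop.
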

\begin{proof}
Let $Y=\Sigma X$. Let $J(Y)\wedge J(Y)$ be filtered by
$$
\mathrm{Fil}_n(J(Y)\wedge J(Y))=\bigcup_{i+j\leq n} J_i(Y)\wedge J_j(Y).
$$
Since $Y$ is a co-$H$-space, there exists a filtration-preserving map $$\bar\psi\colon J(Y)\to J(Y)\wedge J(Y)$$ such that $\bar\psi$ is homotopic to the reduced diagonal $\bar\Delta$. Consider the following homotopy commutative diagram
\begin{diagram}
J(Y)&\lInto&J_3(Y)&\rTo^{\mathrm{pinch}}& Y^{\wedge 3}\\
\dTo>{\bar\Delta}&&\dTo^{\bar\psi_3}&&\dTo^{\mathrm{shuffle}}\\
J(Y)\wedge J(Y)&\lInto&\mathrm{Fil}_3(J(Y)\wedge J(Y))&\rTo^{\mathrm{pinch}}&(J_2(Y)/J_1(Y)\wedge Y)\vee (Y\wedge J_2(Y)/J_1(Y))\\
 &\rdDashto>{H}&   &      &\dTo>{\mathrm{proj.}}\\
 &   &J(Y^{\wedge 3})&\lInto& J_2(Y)/J_1(Y)\wedge Y=Y^{\wedge 3}\\
 & &   &\rdTo^{\Omega W_3}&\dTo>{S_3}\\
 & & & &\Omega\Sigma Y,\\
 \end{diagram}
 where $S_3$ is the Samelson product and the extension map $H$ exists by the suspension splitting of $J(Y)\wedge J(Y)$. By using the Cohen program, the composite from $J_3(Y)$ goes through the right column represents the element
 $$
 [[x_1,x_2],x_3] +[[x_2,x_1],x_3]+[[x_3,x_1],x_2]=-[[x_1,x_3],x_2]
 $$
 in the Cohen group $K_3$. The assertion follows by letting the composite from $J_3(Y)$ take the path from the right hand side with using the property that $\Omega\bar\Delta\colon \Omega J(Y)\to \Omega (J(Y)\wedge J(Y))$ is null homotopic.
\end{proof}

\subsection{The configuration spaces and the obstruction $\zeta_2$} The obstruction $\zeta_2$ is essential after looping in general. We use configuration spaces for reducing the obstruction $\zeta_2$ from the cubic range to the quadratic range. We refer C. -F. B\"odigheimer's work ~\cite{Bodigheimer} as a reference on configuration space models for mapping spaces as well as his constructions of the Hopf invariants on configuration spaces.

Let $M$ be a smooth manifold, $M_0$ a submanifold, and $X$ a pointed $CW$-complex. Let $C(M,M_0;X)$ be the configuration spaces with labels in $X$ in the sense of~\cite{Bodigheimer} with the filtration $C_n=C_n(M,M_0;X)$ induced by the configuration length. Let $D_n=D_n(M,M_0;X)$ denote $C_n/C_{n-1}$. We will use the following properties:
\begin{enumerate}
\item~\cite[Lemma, p. 178]{Bodigheimer} Let $N$ be a codimension zero submanifold $M$. Then the isotopy cofibration
$$
(N, N\cap M_0)\rTo (M, M_0) \rTo(M, N\cup M_0)
$$
induces a quasi-fibration
$$
C(N, N\cap M_0;X)\rTo C(M,M_0;X)\rTo C(M, N\cup M_0;X)
$$
provided that $(N,N\cap N_0)$ or $X$ is connected.
\item~\cite[Section 3]{Bodigheimer} Let $V=\bigvee\limits_{k=1}^\infty D_k$. There is a Cohen construction~\cite{Cohen1} as a power set map $P\colon C(M,M_0;X)\to  C(\R^\infty;V)$, which is natural on $(M,M_0)$ and $X$, inducing a stable splitting of $C(M,M_0;X)$. The Hopf invariant is given by the composite
$$
H_k\colon C(M,M_0;X)\rTo^{P} C(\R^\infty;V)\rTo^{\mathrm{proj.}} C(\R^\infty; D_k)
$$
for $k\geq1$.
\end{enumerate}
In particular, let $I=[0,1]$, there is a quasi-fibration
$$
C([0,1]\times I;X)\rTo C(([0,3], [2,3])\times I;X)\rTo C(([0,3],[0,1]\cup[2,3])\times I;X)
$$
for any path-connected $CW$-complex $X$ with
$$C(([0,3], [2,3])\times I;X)\simeq \ast \textrm{ and } C([0,1]\times I;X)\simeq \Omega^2\Sigma^2X.$$
We choose the evaluation map $$\Sigma \Omega^2\Sigma^2X\simeq \Sigma C(I^2;X)\to \Omega\Sigma^2X\simeq J(\Sigma X)$$ as the composite of
$$
\Sigma C(I^2;X)\simeq C(([0,3], \partial_{+})\times I;X)/C(I\times I;X)\rTo^{\mathrm{pinch}} C(([0,3],\partial)\times I;X),
$$
where $\partial_{+}=[2,3]$ and $\partial=[0,1]\cup [2,3]$, followed by composing the homotopy inverse of the composite
$$
J(\Sigma X)\rInto^{\simeq} C(I;\Sigma X)\rInto^{\simeq} C(([0,3],\partial)\times I;X).
$$
The evaluation map $\sigma\colon \Sigma C(I^2;X)\to J(\Sigma X)$ defined in such a way is a filtration-preserving map up to homotopy, and so its restrictions give maps
\begin{equation}\label{equation2.5}
\sigma_k\colon\Sigma C_k(I^2;X)\longrightarrow J_k(\Sigma X)
\end{equation}
inducing the \textit{reduced evaluation maps}
\begin{equation}\label{equation2.6}
\bar\sigma_k\colon \Sigma D_k(I^2;X)=\Sigma (C_k/C_{k-1})\longrightarrow (\Sigma X)^{\wedge k}=J_k(\Sigma X)/J_{k-1}(\Sigma X)
\end{equation}
for $k\geq 1$, where $\sigma_1=\bar\sigma_1\colon \Sigma C_1(I^2;X)=\Sigma X\to J_1(\Sigma X)$ is a homotopy equivalence. Moreover, by applying the naturality of the Hopf invariants on $(M,M_0)$ to the composites in the definition of the evaluation map $\sigma$, there is a homotopy commutative diagram
\begin{equation}\label{equation2.7}
\begin{diagram}
    &     & \Sigma C(I^2;X)&     &\rTo^{\Sigma H_k}& \Sigma C(\R^\infty;D_k) &\rEq  &\Sigma C(\R^\infty;D_k)\\
    &\ruInto&\dTo>{\sigma}&     &\ruInto&           &&\dDashto\\
\Sigma C_k & \rTo^{\mathrm{pinch}}&                         &\Sigma D_k&   &&&\\
\dTo>{\sigma_k} &&         &\dTo>{\bar\sigma_k}&&&&\\
        &        &J(\Sigma X)    & &\rTo^{H_k}&J((\Sigma X)^{\wedge k})&\rInto& C(\R^{\infty};(\Sigma X)^{\wedge k})\\
        &\ruInto&                    & &\ruInto&&&\\
  J_k(\Sigma X)&\rTo^{\mathrm{pinch}}&&(\Sigma X)^{\wedge k}&&&&&\\
  \end{diagram}
  \end{equation}
Let $\sk_n(Y)$ denote the $n$-th skeleton of $Y$.
\begin{thm}\label{theorem2.3}
Let $X$ be a simply connected space with the connectivity $|X|$ such that $\id_{\Sigma^2X}$ has exponent $4$ in $[\Sigma^2X,\Sigma^2X]$. Then there is a map
$$
\tilde H_2\colon \sk_{4|X|-1}(\Omega^2\Sigma^2X)\to D_2(I^2;X)
$$
such that the adjoint map of the $4$-th power $4\colon \Omega^2\Sigma^2X\to \Omega^2\Sigma^2X$ restricted to $\sk_{4|X|-1}(\Omega^2\Sigma^2X)$ is homotopic to the composite
$$
\Sigma \sk_{4|X|-1}(\Omega^2\Sigma^2X)\rTo^{\Sigma \tilde H_2} \Sigma D_2(I^2;X)\rTo^{\bar\sigma_2} (\Sigma X)^{\wedge 2}\rTo^{2\cdot S_2} \Omega \Sigma^2 X.
$$
\end{thm}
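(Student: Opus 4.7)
The plan is to work in B\"odigheimer's configuration space model $\Omega^2\Sigma^2 X\simeq C(I^2;X)$, with its filtration $\{C_k\}$ and layers $D_k$, and to translate the $4$-th power map on the skeleton into the algebraic information supplied by Lemma~\ref{lemma2.1} and Proposition~\ref{proposition2.2}. First I would take adjoints: because $\sigma\colon \Sigma C(I^2;X)\to J(\Sigma X)\simeq\Omega\Sigma^2 X$ lands in an $H$-space, the adjoint of the restriction $4|_{\sk_{4|X|-1}(\Omega^2\Sigma^2 X)}$ is the composite of the restriction of $\sigma$ with the $H$-space $4$-th power on $\Omega\Sigma^2 X$. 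Thus it suffices to identify
\[
\Sigma\sk_{4|X|-1}(\Omega^2\Sigma^2 X)\xrightarrow{\Sigma i}\Sigma C(I^2;X)\xrightarrow{\sigma}J(\Sigma X)\xrightarrow{4}\Omega\Sigma^2 X.
\]

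The next step is a connectivity reduction. Since $D_k(I^2;X)$ is built out of the labels $X^{\wedge k}$ and $X$ is $|X|$-connected, $D_k$ is at least $(k(|X|+1)-1)$-connected; for $k\geq 4$ this strictly exceeds $4|X|-1$. Iterating the cofibrations $C_{k-1}\to C_k\to D_k$ for $k\geq 4$ lets me deform the inclusion $\sk_{4|X|-1}(\Omega^2\Sigma^2 X)\hookrightarrow C(I^2;X)$ through a lift $\bar f\colon \sk_{4|X|-1}(\Omega^2\Sigma^2 X)\to C_3(I^2;X)$. Using $\sigma|_{\Sigma C_3}=\sigma_3$, the adjoint becomes $(4|_{J_3})\circ\sigma_3\circ\Sigma\bar f$. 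By the Obstruction Lemma~\ref{lemma2.1}, $[4|_{J_3}]=\zeta_2\cdot\zeta_3$, and precomposition with $\sigma_3\circ\Sigma\bar f$ is a group homomorphism to $[\Sigma\sk_{4|X|-1}(\Omega^2\Sigma^2 X),\Omega\Sigma^2 X]$. Since $\sigma_3\circ\Sigma\bar f$ is a map from a suspension, Proposition~\ref{proposition2.2} forces $\zeta_3\circ\sigma_3\circ\Sigma\bar f\simeq\ast$, leaving only the $\zeta_2$ summand to analyze.

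Finally, I would use diagram~(\ref{equation2.7}) to rewrite the surviving factor in the stated form. That diagram identifies the composite $\Sigma C_3\xrightarrow{\sigma_3}J_3(\Sigma X)\hookrightarrow J(\Sigma X)\xrightarrow{H_2}J((\Sigma X)^{\wedge 2})\xrightarrow{\Omega W_2^2}\Omega\Sigma^2 X$ with the composite obtained by first applying the B\"odigheimer Hopf invariant $H_2\colon C(I^2;X)\to C(\R^\infty;D_2)$, then $\bar\sigma_2\colon \Sigma D_2\to (\Sigma X)^{\wedge 2}$, then $2\cdot S_2\colon (\Sigma X)^{\wedge 2}\to \Omega\Sigma^2 X$. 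The factor $2$ reflects that $W_2^2$ is twice the Whitehead product $W_2$ (the squared commutators in~(\ref{equation2.4})), and the Samelson product $S_2$ is the loop adjoint of $W_2$. One further connectivity estimate---the higher layers of $C(\R^\infty;D_2)$ involve $D_2^{\wedge k}$ for $k\geq 2$, so they are at least $(4|X|+3)$-connected---lets me factor the restriction of the B\"odigheimer Hopf invariant on $\sk_{4|X|-1}(\Omega^2\Sigma^2 X)$ unstably through $D_2$, producing the required map $\tilde H_2$. The hardest step will be this final one: since diagram~(\ref{equation2.7}) commutes only up to homotopy, some careful skeletal obstruction theory is needed to realize $\tilde H_2$ unstably rather than only stably, and to identify $\bar\sigma_2\circ\Sigma\tilde H_2$ with the suspension of the James-Hopf composite on the nose.
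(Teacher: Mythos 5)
Your proposal follows essentially the same route as the paper: lift the skeleton inclusion into $C_3(I^2;X)$ by connectivity of the higher layers $D_k$, apply the decomposition $[4|_{J_3}]=\zeta_2\cdot\zeta_3$ of Lemma~\ref{lemma2.1} after taking adjoints, kill $\zeta_3$ via Proposition~\ref{proposition2.2}, and identify the surviving $\zeta_2$ term with $2\cdot S_2\circ\bar\sigma_2\circ\Sigma\tilde H_2$ using the naturality diagram~(\ref{equation2.7}) together with the connectivity of $(\Sigma X)^{\wedge 2}$ to factor the configuration-space Hopf invariant through $D_2$. The step you flag as hardest is exactly what the paper disposes of with diagrams~(\ref{equation2.7}) and~(\ref{equation2.8}), so your account is, if anything, more explicit than the paper's.
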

\begin{proof}
Note that $D_2(I^2;X)$ is the $(4|X|-1)$-skeleton of $C(\R^\infty; D_2)$. There is a homotopy commutative diagram
\begin{equation}\label{equation2.8}
\begin{diagram}
\sk_{4|X|-1}(\Omega^2\Sigma^2 X)&\rInto & C_3(I^2;X)&\rInto& \Omega^2\Sigma^2X\\
\dDashto>{\tilde H_2} &  &   &   &\dTo>{H_2}\\
D_2&      &\rInto  &&C(\R^{\infty};D_2)\\
\end{diagram}
\end{equation}
for some map $\tilde H_2$. This gives the map $\tilde H_2$ in the statement.

By Proposition~\ref{proposition2.2}, $\zeta_3\circ\sigma_3$ is null homotopic. So we only need to consider the obstruction $\zeta_2$. By diagram~(\ref{equation2.7}) using the property that $|(\Sigma X)^{\wedge 2}|=2(|X|+1)$, there is a homotopy commutative diagram
\begin{diagram}
\Sigma(\sk_{4|X|-1}(\Omega^2\Sigma^2 X))&\rTo^{\sigma_3|}& J_3(\Sigma X)&  &\\
\dTo>{\Sigma \tilde H_2}&   &&\rdTo>{H_2}&\\
\Sigma D_2(I^2;X)&\rTo^{\bar\sigma_2}&(\Sigma X)^{\wedge 2}&\rInto& J((\Sigma X)^{\wedge 2}).\\
\end{diagram}
The assertion then follows by Lemma~\ref{lemma2.1}.

\end{proof}

\section{The reduced evaluation map on mod $2$ Moore spaces}\label{section3}

In this section, we give some lemmas on the reduced evaluation map
$$
\bar\sigma=\bar\sigma_2\colon \Sigma D_2=\Sigma D_2(I^2;X)=\Sigma D_2(\R^2;X)\longrightarrow (\Sigma X)^{\wedge 2}
$$
in the case that $X$ is a mod $2$ Moore space. Let $P^n(2)=S^{n-1}\cup_2e^n$ be the $n$-dimensional mod $2$ Moore space.

\begin{lem}~\cite[Proposition 2.5]{Wu}\label{lemma3.1}
Let $n\geq 3$. Then the degree $2$ map $[2]\colon P^n(2)\to P^n(2)$ is homotopic to the composite $$
P^n(2)\rTo^{\mathrm{pinch}}S^n\rTo^{\eta}S^{n-1}\rInto P^n(2).
$$
Thus the degree $2$ map $[2]\colon P^n(2)^{\wedge 2}\to P^n(2)^{\wedge 2}$ is homotopic to the composite
$$
P^n(2)^{\wedge 2}\rTo^{\mathrm{pinch}}P^{2n}(2)\rTo^{\eta\wedge\id}P^{2n-1}(2)\rInto P^n(2)^{\wedge 2}.
$$
\hfill $\Box$
\end{lem}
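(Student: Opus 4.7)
The plan is to factor $[2]$ through the pinch $q$, identify the factor, and then derive the smash-product version by smashing with the identity.

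First I would show $[2] \circ i: S^{n-1} \to P^n(2)$ is null-homotopic: since $i$ is a suspension, the degree $2$ map commutes with it, giving $[2] \circ i \simeq i \circ 2$, and this composite is null because $2: S^{n-1} \to S^{n-1}$ is precisely the attaching map of the top cell of $P^n(2) = S^{n-1} \cup_2 e^n$. The Puppe exact sequence
$$
[S^n, P^n(2)] \xrightarrow{q^*} [P^n(2), P^n(2)] \xrightarrow{i^*} [S^{n-1}, P^n(2)]
$$
then produces $g \in \pi_n(P^n(2))$ with $[2] \simeq g \circ q$; continuing the sequence to the left shows $g$ is determined modulo $\Ker(q^*) = 2 \pi_n(P^n(2))$.

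Next I would identify $g$ modulo $2\pi_n(P^n(2))$. Stably, $\pi_n(P^n(2)) \cong \Z/2$ generated by $i \circ \eta$: this follows from the stable Puppe long exact sequence for the cofibration $S^{n-1} \xrightarrow{2} S^{n-1} \to P^n(2) \to S^n$ applied to $\pi_n$, using that multiplication by $2$ on $\pi_n(S^n) = \Z$ has trivial kernel. Hence $g \in \{0, i\eta\} \pmod{2\pi_n(P^n(2))}$. The crucial input is that $[2] \not\simeq \ast$ (the classical fact that the suspension exponent of $P^n(2)$ is exactly $4$, not $2$, detectable by $K$-theory or Toda brackets), which rules out $g = 0$, giving $g = i\eta$ and hence $[2] \simeq i\eta q$.

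For the second assertion, I would smash the first identity with $\id_{P^n(2)}$: since the degree $2$ map on a smash equals $[2] \wedge \id$, one obtains $[2]_{P^n(2)^{\wedge 2}} \simeq (i \wedge \id) \circ (\eta \wedge \id) \circ (q \wedge \id)$. The three factors identify, respectively, with the pinch $P^n(2)^{\wedge 2} \to S^n \wedge P^n(2) = \Sigma^n P^n(2) = P^{2n}(2)$, with $\eta \wedge \id: P^{2n}(2) \to P^{2n-1}(2)$, and with the inclusion $P^{2n-1}(2) = \Sigma^{n-1}P^n(2) \hookrightarrow P^n(2)^{\wedge 2}$, matching the composite in the lemma. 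The main subtle point of the whole argument is the non-triviality input $[2] \not\simeq \ast$ used to rule out $g = 0$; everything else is a formal manipulation with the Puppe sequence and functoriality of the smash product.
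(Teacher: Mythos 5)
The paper does not actually prove this lemma: it is quoted verbatim from \cite[Proposition 2.5]{Wu} (note the $\Box$ placed in the statement itself), so there is no in-paper argument to compare against. Your proof is the standard one and is essentially correct: you factor $[2]$ through the pinch map via the Puppe sequence (using that $i$ is a co-H map so $[2]\circ i\simeq i\circ 2\simeq *$), identify the factor $g\in\pi_n(P^n(2))$ up to $\Ker(q^*)=2\pi_n(P^n(2))$, and use $[2]\not\simeq *$ to pin it down; the smash-product statement then follows formally from $[2]_{A\wedge B}\simeq[2]_A\wedge\id_B$. Two small points. First, for $n=3$ the unstable group $\pi_3(P^3(2))$ is $\Z/4$ generated by $i\eta$, not $\Z/2$, so the sentence deducing ``$g\in\{0,i\eta\}$'' from the stable computation is not literally justified there; the argument still closes because $\pi_3(P^3(2))$ is cyclic on $i\eta$ and $q^*$ kills $2\pi_n$, but you should either restrict to $n\ge 4$ (where Blakers--Massey puts $\pi_n(P^n(2))$ in the stable range and gives $\Z/2\{i\eta\}$) or add that remark. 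Second, the nontriviality input $[2]\not\simeq *$ is indeed the crux, and it is essentially equivalent to the statement being proved once the factorization is in hand; your proposed sources are legitimate, and perhaps the cleanest justification is that the stable cofibre of $[2]\colon P^n(2)\to P^n(2)$ is $\Sigma^{n-3}\RP^2\wedge\RP^2$, on which $Sq^2$ acts nontrivially from the bottom class to the top by the Cartan formula, which is incompatible with the splitting $P^n(2)\vee P^{n+1}(2)$ that $[2]\simeq *$ would force.
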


Let $u,v$ be a basis for the mod $2$ homology $\tilde H_*(P^n(2))$ with $|u|=n-1$ and $|v|=n$. Then the mod $2$ homology $H_*(\Omega P^{n+1}(2))=T(u,v)$ with $Sq^1_*v=u$. By the work of Dyer-Lashof~\cite{Dyer-Lashof}, $\tilde H_*(D_2(\R^2;P^{n-1}(2)))$ has the following basis

\begin{diagram}
2n-1& &                  &       &             &    &  \tau(v^2) &  &\\
        & &                    &         &            &       &      &\rdTo>{Sq^1_*}&\\
2n-2&&                        &       &\tau(v)^2&     & \dTo>{Sq^2_*} &    &\tau([u,v])\\
         &&                         &     &               &\rdTo>{\beta_2}&      &&\\
2n-3   & & \tau(u)\tau(v)    &   &\dTo>{Sq^2_*}&       &\tau(u^2)& &\\
            &&        &\rdTo>{Sq^1_*}&  &&&&\\
 2n-4&  &      &     &\tau(u)^2,&&&&\\
 \end{diagram}
where the Steenrod operations follow from that on $H_*(\Omega P^{n+1}(2))$.

The reduced evaluation $\bar\sigma_*\colon \tilde H_*(D_2(\R^2,P^{n-1}(2)))\to \tilde H_*((P^n(2))^{\wedge 2})$ is given by $\bar\sigma_*(\tau(v^2))=v^2$, $\bar\sigma_*(\tau[u,v])=[u,v]$, $\bar\sigma_*(\tau(u^2))=u^2$ and sending remaining $3$ elements to zero.

Let us do cellular analysis on the homotopy of $\Sigma D_2(\R^2;P^{n-1}(2))$. The elements $\{\tau(v^2), \tau([u,v]), \tau(v)^2, \tau(u^2)\}$ has a structure of $P^{2n-2}(4)$ attached by $2$-cells through a map $P^{2n-2}(2)\to P^{2n-2}(4)$.

\begin{lem}\label{lemma3.2}
There exists a unique $4$-cell complex $C^{2n-1}$ such that mod $2$ homology $\tilde H_*(C^{2n-1})$ has a basis $\{a_{2n-3}, b_{2n-2}, c_{2n-2}, d_{2n-1}\}$ with $\beta_2(b)=a, Sq^2_*(d)=a $ and $Sq^1_*(d)=c$.
\end{lem}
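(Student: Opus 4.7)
The plan is to construct $C^{2n-1}$ by an explicit sequence of cell attachments and then to verify uniqueness (working $2$-locally, as is natural throughout this context) by classifying the possible top-cell attaching maps in the relevant range.

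For existence I would build the complex stage-by-stage. Begin with $S^{2n-3}$, carrying the class $a$. Attach a $(2n-2)$-cell along a degree-$4$ map to form the mod $4$ Moore space $P^{2n-2}(4)$, supplying the class $b$ with $\beta_2(b)=a$. Wedge on an $S^{2n-2}$ (carrying $c$) to obtain the $(2n-2)$-skeleton
\[
Y := P^{2n-2}(4)\vee S^{2n-2}.
\]
Finally, attach a $(2n-1)$-cell along
\[
\phi := (i\circ\eta) + 2\,\iota \colon S^{2n-2}\longrightarrow Y,
\]
where $i\colon S^{2n-3}\hookrightarrow P^{2n-2}(4)$ is the bottom-cell inclusion, $\eta$ is the (stable) Hopf class, and $\iota\colon S^{2n-2}\hookrightarrow Y$ is the inclusion of the wedge summand.

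I would verify the prescribed mod $2$ Steenrod operations by examining two natural quotients of $C^{2n-1}$. Collapsing the subcomplex $P^{2n-2}(4)\subset Y$ yields $C^{2n-1}/P^{2n-2}(4)\simeq P^{2n-1}(2)$, because the $S^{2n-2}$-component of $\phi$ is degree $2$; naturality of $Sq^1_*$ against the quotient map then forces $Sq^1_*(d)=c$. Collapsing instead the $(2n-2)$-cell of $P^{2n-2}(4)$ together with the $S^{2n-2}$-summand (which together exhaust cells $b$ and $c$) reduces $Y$ to $S^{2n-3}$, and $\phi$ passes to $\eta$ under this quotient; hence $C^{2n-1}$ maps to the two-cell complex $S^{2n-3}\cup_\eta e^{2n-1}$, on whose top cell $Sq^2_*$ is nontrivial, giving $Sq^2_*(d)=a$ by naturality. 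The relation $\beta_2(b)=a$ is inherent to the $P^{2n-2}(4)$-subcomplex.

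For uniqueness, suppose $C'$ carries the same homology and operations. A $2$-local complex with two cells in dimensions $2n-3$ and $2n-2$ joined by a nontrivial second Bockstein is forced to be $P^{2n-2}(4)$, and the remaining $(2n-2)$-cell, being free of all listed operations, wedges on; so the $(2n-2)$-skeleton of $C'$ is equivalent to $Y$. The top-cell attaching map of $C'$ is therefore an element of $\pi_{2n-2}(Y)$. Since $\pi_{2n-1}(P^{2n-2}(4)\wedge S^{2n-2})=0$ in our range (the smash being $(4n-5)$-connected), the Hilton--Milnor splitting reduces to $\pi_{2n-2}(Y)\cong\pi_{2n-2}(P^{2n-2}(4))\oplus\pi_{2n-2}(S^{2n-2})\cong\Z/2\oplus\Z$. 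The condition $Sq^2_*(d)=a$ pins the first coordinate to $i\eta$ and $Sq^1_*(d)=c$ pins the second to $2\pmod{4}$; the residual $4\Z$-ambiguity in the latter is washed out $2$-locally by precomposition with an odd degree self-equivalence of $S^{2n-2}$, producing a homotopy equivalence $C^{2n-1}\simeq C'$.

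The main technical hurdle is the uniqueness step --- in particular, the clean splitting of $\pi_{2n-2}(Y)$ in this range and the verification that every residual choice of attaching map is absorbed by the available $2$-local self-equivalences. Existence is a routine cellular construction, and the Steenrod verification is a short naturality argument via quotient subcomplexes.
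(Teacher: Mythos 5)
Your proof is correct and lands on the same complex, but it organizes the argument differently from the paper. The paper realizes $C^{2n-1}$ as the mapping cone of a map $P^{2n-2}(2)\to P^{2n-2}(4)$ (this is how the complex actually arises inside $\Sigma D_2(\R^2;P^{n-1}(2))$), computes $[P^{2n-2}(2),P^{2n-2}(4)]\cong\Z/2\oplus\Z/2$ from the cofibre sequence of $S^{2n-3}\xrightarrow{2}S^{2n-3}$, and observes that among the four classes only $g_2=i\circ\eta\circ q$ induces the zero map on $H_{2n-2}$ and produces a nontrivial $Sq^2_*$, so it is the unique admissible attaching map. You instead fix the $(2n-2)$-skeleton as $Y=P^{2n-2}(4)\vee S^{2n-2}$ and classify top-cell attaching maps in $\pi_{2n-2}(Y)\cong\Z/2\oplus\Z$, pinning down $i\eta+2\iota$ up to $2$-local units. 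The two classifications carry the same information (the paper's $\Z/2\oplus\Z/2$ is essentially your $\Z/2\oplus\Z$ after reducing the degree coordinate), but your version is arguably more complete as a proof of uniqueness among \emph{all} $4$-cell complexes, since you also justify why the $(2n-2)$-skeleton is forced to be $Y$, a step the paper leaves implicit; the paper's version has the advantage of matching the geometric source of $C^{2n-1}$ and of staying entirely inside $2$-torsion spaces, so no localization caveat is needed. One small repair in your existence step: ``collapsing the $(2n-2)$-cell of $P^{2n-2}(4)$ together with the $S^{2n-2}$-summand'' is not a subcomplex collapse. The clean version of your argument is to note that $\phi=i\eta+2\iota$ factors through $S^{2n-3}\vee S^{2n-2}$, so $A=(S^{2n-3}\vee S^{2n-2})\cup_\phi e^{2n-1}$ is a genuine subcomplex of $C^{2n-1}$ carrying $a$ and $d$; collapsing its $S^{2n-2}$ wedge summand gives $S^{2n-3}\cup_\eta e^{2n-1}\simeq\Sigma^{2n-5}\CP^2$, and naturality of $Sq^2_*$ along $A\hookrightarrow C^{2n-1}$ and $A\to A/S^{2n-2}$ yields $Sq^2_*(d)=a$. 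With that adjustment your argument is sound.
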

\begin{proof}
Consider the short exact sequence
\begin{diagram}
2\cdot \pi_{2n-3}(P^{2n-2}(4))=\Z/2& \lOnto& [P^{2n-2}(2), P^{2n-2}(4)]&\lInto&\pi_{2n-2}(P^{2n-2}(4))=\Z/2.\\
\end{diagram}
Let $g_1$ be the map in the commutative diagram of cofibre sequences
\begin{diagram}
S^{2n-3}&\rInto& P^{2n-2}(2)&\rTo& S^{2n-2}\\
\dTo>{[2]}&&\dTo>{g_1}&&\dEq\\
S^{2n-3}&\rInto& P^{2n-2}(4)&\rTo& S^{2n-2}.\\
\end{diagram}
Then $2[g]$ is given by the composite
$$
P^{2n-2}(2)\rTo S^{2n-2}\rTo^{\eta}S^{2n-3} \rTo P^{2n-2}(2)\rTo^{g_1}P^{2n-2}(4),
$$
which is null homotopic because $g_1|S^{2n-3}$ factors through degree $[2]\colon S^{2n-3}\to S^{2n-3}$. Then
$$
[P^{2n-2}(2), P^{2n-2}(4)]=\Z/2\oplus \Z/2.
$$
Let $g_2$ be the composite
$$
P^{2n-2}(2)\rTo^{q} S^{2n-2}\rTo^{\eta} S^{2n-3}\rInto P^{2n-2}(4).
$$
Then the $3$ essential elements in $[P^{2n-2}(2), P^{2n-2}(4)]$ are given by $[g_1], [g_2]$ and $[g_1+g_2]$.

Since $[g_1]_*, [g_1+g_2]_*\colon H_{2n-2}(P^{2n-2}(2))\to H_{2n-2}(P^{2n-2}(4))$ are nonzero, $[g_2]$ is the only homotopy class as the attaching map for $C^{2n-1}$. The proof is finished.

\end{proof}

By pinching two bottom elements, we have a pinch map $\phi\colon \Sigma D_2(\R^2;P^{n-1}(2))\to C^{2n}$ with a commutative diagram
\begin{equation}\label{equation3.1}
\begin{diagram}
\Sigma D_2(\R^2;P^{n-1}(2))&\rTo^{\bar\sigma}& P^n(2)^{\wedge 2}\\
\dTo>{\phi}&&\dTo{p_2}\\
C^{2n}&\rTo^{p_3}& P^{2n}(2),\\
\end{diagram}
\end{equation}
where $p_3$ induces an epimorphism on mod $2$ homology.

Consider the commutative diagram of cofibre sequences
\begin{diagram}
P^{2n-2}(2)&\rTo^{g_2}&P^{2n-2}(4)& \rTo&C^{2n-1}&\rTo&P^{2n-1}(2)\\
\uEq&&\uInto&& \uTo>{j_1}&&\uEq\\
P^{2n-2}(2)&\rTo^{\eta\circ q}&S^{2n-3}& \rTo&\tilde C^{2n-1}&\rTo&P^{2n-2}(2)\\
\uEq&&\uTo&&\uTo>{j_2}&&\uEq\\
P^{2n-2}(2)&\rTo^{\bar\eta\circ q}&P^{2n-3}(2)& \rTo&\bar C^{2n-1}&\rTo&P^{2n-2}(2),\\
\end{diagram}
where $\bar\eta\colon S^{2n-2}\to P^{2n-3}(2)$ is a lifting of $\eta$. Observe that
\begin{enumerate}
\item  The map  $j_1$ induces a monomorphism on mod $2$ homology.
\item The homotopy cofibre of $j_1\circ j_2\colon \bar C^{2n-1}\to C^{2n-1}$ is the same as the homotopy cofibre of $P^{2n-3}(2)\to S^{2n-3}\to P^{2n-2}(4)$.
\item The mod $2$ homology $\tilde H_*(\bar C^{2n-1})$ has a basis $\{x_{2n-1}, x_{2n-2}, x_{2n-3}, x_{2n-4}\}$ with $Sq^1_*(x_{2n-1})=x_{2n-2}, Sq^1_*(x_{2n-3})=x_{2n-4}, Sq^2_*(x_{2n-1})=x_{2n-3}, Sq^2_*(x_{2n-2})=x_{2n-4}$.
\end{enumerate}

\begin{lem}\label{lemma3.3}
The homotopy cofibre of $P^{2n-3}(2)\to S^{2n-3}\to P^{2n-2}(4)$ is $P^{2n-2}(8)$.
\end{lem}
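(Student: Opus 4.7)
The plan is to apply the octahedral axiom to the factorization $f = i\circ p$ appearing in the statement, where $p\colon P^{2n-3}(2)\to S^{2n-3}$ is the pinch onto the top cell and $i\colon S^{2n-3}\hookrightarrow P^{2n-2}(4)$ is the bottom cell inclusion. This reduces identifying the homotopy cofibre to a single degree computation. The octahedral axiom furnishes a distinguished triangle
$$
C(p)\longrightarrow C(f)\longrightarrow C(i)\stackrel{\delta}{\longrightarrow}\Sigma C(p).
$$
Both outer terms are spheres: directly $C(i)=P^{2n-2}(4)/S^{2n-3}=S^{2n-2}$, and rotating the cofibre sequence $S^{2n-4}\stackrel{2}{\to}S^{2n-4}\to P^{2n-3}(2)\stackrel{p}{\to}S^{2n-3}$ identifies $C(p)\simeq \Sigma S^{2n-4}=S^{2n-3}$.

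The main step is computing the degree of $\delta$. By the octahedral axiom, $\delta$ equals (up to sign) the composite
$$
S^{2n-2}=C(i)\stackrel{\partial_i}{\longrightarrow}\Sigma S^{2n-3}\stackrel{\Sigma j_p}{\longrightarrow}\Sigma C(p)=S^{2n-2},
$$
where $\partial_i$ is the Puppe connecting map of $i$ and $j_p\colon S^{2n-3}\to C(p)$ is the subsequent Puppe map of $p$. Rotating the Puppe sequences of $4\colon S^{2n-3}\to S^{2n-3}$ and $2\colon S^{2n-4}\to S^{2n-4}$ one more step identifies $\partial_i$ as a degree $\pm 4$ self-map of $S^{2n-2}$ and $j_p$ as a degree $\pm 2$ self-map of $S^{2n-3}$. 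Hence $\delta$ has degree $\pm 8$, and one further rotation of the distinguished triangle exhibits $C(f)$ as the mapping cone of $\pm 8\colon S^{2n-3}\to S^{2n-3}$, namely $P^{2n-2}(8)$.

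The only real delicacy is bookkeeping the signs in the octahedral-axiom formula for $\delta$, but since the homotopy type of the cofibre of a degree-$k$ self-map of a sphere depends only on $|k|$, these signs do not affect the conclusion. As an independent sanity check, the cellular chain complex of $C(f)$, built from the cells of $P^{2n-2}(4)$ together with the cone cells coming from $P^{2n-3}(2)$, has boundary $C_{2n-2}\to C_{2n-3}$ represented by the matrix $\bigl(\begin{smallmatrix}4 & 1\\ 0 & -2\end{smallmatrix}\bigr)$, with elementary divisors $1$ and $8$. This yields $\tilde H_{2n-3}(C(f);\Z)=\Z/8$ and vanishing reduced homology in all other dimensions, so simple-connectedness together with uniqueness of Moore spaces forces $C(f)\simeq P^{2n-2}(8)$.
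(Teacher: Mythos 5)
Your argument is correct. It is the octahedral axiom applied to the factorization $f=i\circ p$ of the given composite, which obliges you to identify the connecting map $\delta\colon C(i)\to\Sigma C(p)$ and compute its degree as $(\pm 2)\cdot(\pm 4)=\pm 8$ before reading off the cofibre; your cellular-chain computation at the end is an independent and equally valid route to the same conclusion (and is what actually justifies the final identification, since in spaces one cannot literally ``rotate backwards''---one instead observes that $\delta$ is $\pm$ the suspension of the attaching map of the top cell of $C(f)$, or simply quotes the homology computation together with uniqueness of simply connected Moore spaces). The paper slices the same octahedron the other way: it starts from the factorization $[8]=[2]\circ[4]$ of self-maps of $S^{2n-3}$ and writes down the resulting $3\times 3$ diagram of cofibre sequences, with rows $S^{2n-3}\stackrel{[4]}{\to}S^{2n-3}\to P^{2n-2}(4)$ and $S^{2n-3}\stackrel{[8]}{\to}S^{2n-3}\to P^{2n-2}(8)$ and with the column $P^{2n-3}(2)\stackrel{p}{\to}S^{2n-3}\stackrel{[2]}{\to}S^{2n-3}$; the remaining column is then automatically the cofibre sequence $P^{2n-3}(2)\to P^{2n-2}(4)\to P^{2n-2}(8)$, so no degree bookkeeping and no appeal to uniqueness of Moore spaces is required. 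Both proofs are sound; the paper's is slightly more economical, while yours makes the degree of the connecting map explicit and double-checks the answer on homology.
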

\begin{proof}
This follows from the commutative diagram of cofibre sequences
\begin{diagram}
&& P^{2n-3}&\rEq& P^{2n-3}(2)\\
& &\dTo&&\dTo\\
S^{2n-3}&\rTo^{[4]}&S^{2n-3}&\rInto& P^{2n-2}(4)\\
\dEq&&\dTo>{[2]}& &\dTo\\
S^{2n-3}&\rTo^{[8]}&S^{2n-3}&\rTo& P^{2n-2}(8).\\
\end{diagram}
\end{proof}

\begin{lem}\label{lemma3.4}
The space $\bar C^{2n-1}\simeq \Sigma^{2n-7} \CP^2\wedge \RP^2$.
\end{lem}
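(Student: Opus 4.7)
The plan is to realise both $\bar C^{2n-1}$ and $\Sigma^{2n-7}\CP^{2}\wedge\RP^{2}$ as cofibres of maps $P^{2n-2}(2)\to P^{2n-3}(2)$, and then to identify those attaching maps modulo the ambiguity in choosing the lift $\bar\eta$.

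First I would produce a cofibre presentation of the target space. Smashing the Puppe sequence $S^{3}\xrightarrow{\eta}S^{2}\to\CP^{2}\to S^{4}$ with $\RP^{2}$ and suspending $2n-7$ times yields a cofibration
\[
P^{2n-2}(2)\xrightarrow{\Sigma^{2n-7}(\eta\wedge\mathrm{id}_{\RP^{2}})}P^{2n-3}(2)\longrightarrow \Sigma^{2n-7}\CP^{2}\wedge\RP^{2}\longrightarrow P^{2n-1}(2),
\]
which has exactly the same shape as the defining cofibration of $\bar C^{2n-1}$ from the diagram displayed before Lemma~\ref{lemma3.3}.

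Second, I would compare the two attaching maps inside $[P^{2n-2}(2),P^{2n-3}(2)]$. Both compose with the pinch $P^{2n-3}(2)\to S^{2n-3}$ to give the same class $\eta\circ q$, so their difference lies in the image of $[P^{2n-2}(2),S^{2n-4}]\to[P^{2n-2}(2),P^{2n-3}(2)]$ induced by the bottom-cell inclusion. In this range the relevant generator is $\eta^{2}\circ q$, and in $\pi_{2n-2}(P^{2n-3}(2))=\Z/4$ this equals $2\bar\eta$ by the Cohen--Neisendorfer relation (equivalently, $\pi_{2}(M(\Z/2))=\Z/4$). Hence the two attaching maps are congruent modulo $2\bar\eta$, which is precisely the ambiguity in the choice of lift $\bar\eta$; after a suitable re-choice of lift they represent the same homotopy class, and functoriality of the mapping cone then yields the desired equivalence $\bar C^{2n-1}\simeq\Sigma^{2n-7}\CP^{2}\wedge\RP^{2}$.

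The main technical issue is that the two attaching maps visibly differ on the bottom cell of $P^{2n-2}(2)$: the smash-product map $\Sigma^{2n-7}(\eta\wedge\mathrm{id})$ acts as $\eta$ into the bottom sphere of $P^{2n-3}(2)$, whereas $\bar\eta\circ q$ is null there because $q$ collapses the bottom. The reconciliation relies on the $\Z/4$-structure of $\pi_{2n-2}(P^{2n-3}(2))$ to fold this bottom-cell discrepancy into the lift ambiguity, and it is here that the Steenrod bookkeeping recorded in item~(3) of the paragraph preceding this lemma—in particular the $Sq^{2}$ coupling of $x^{2n-4}$ with $x^{2n-2}$, matching the Cartan formula computation of $Sq^{2}(x\otimes y)=x^{2}\otimes y$ in $H^{*}(\CP^{2}\wedge\RP^{2};\F_{2})$—pins down which lift to use.
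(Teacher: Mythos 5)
Your first step is fine: $\Sigma^{2n-7}\CP^{2}\wedge\RP^{2}$ is indeed the cofibre of $\eta\wedge\id\colon P^{2n-2}(2)\to P^{2n-3}(2)$, and comparing attaching maps inside $[P^{2n-2}(2),P^{2n-3}(2)]$ is the right arena. The gap is in the reconciliation step, which is the entire content of the lemma. Every map that factors through the pinch $q\colon P^{2n-2}(2)\to S^{2n-2}$ restricts trivially to the bottom cell $S^{2n-3}\subset P^{2n-2}(2)$; this applies to $\bar\eta\circ q$ for \emph{every} choice of lift $\bar\eta$, and also to your proposed correction term $\eta^{2}\circ q$. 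By contrast, $\eta\wedge\id$ restricts on the bottom cell to the essential class $\iota\circ\eta\in\pi_{2n-3}(P^{2n-3}(2))\cong\Z/2$. So although the difference $[\eta\wedge\id]-[\bar\eta\circ q]$ does lie in the image of $[P^{2n-2}(2),S^{2n-4}]$, the element carrying it is the \emph{extension} $\tilde\eta$ of $\eta$ over $P^{2n-2}(2)$ (the class $h_{2}=\iota\circ\tilde\eta$ in the paper's enumeration), not $\eta^{2}\circ q$: it is nonzero on the bottom cell and therefore cannot be absorbed into the ambiguity of the lift, which only changes $\bar\eta\circ q$ by elements of the form $(\ )\circ q$. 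The relation $2\bar\eta=\iota\circ\eta^{2}$ that you invoke lives in $\pi_{2n-2}(P^{2n-3}(2))\cong\Z/4$, one dimension above where the discrepancy sits, and does not touch it.

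The obstruction is already visible in cohomology: since the $(2n-2)$-cell of the cofibre of $\bar\eta\circ q$ is attached along the null map $(\bar\eta\circ q)|_{S^{2n-3}}$, that cofibre has $Sq^{2}=0\colon H^{2n-4}\to H^{2n-2}$, whereas the Cartan formula $Sq^{2}(a_{2}\otimes b_{1})=a_{2}^{2}\otimes b_{1}$ gives $Sq^{2}\neq 0$ in $H^{*}(\Sigma^{2n-7}\CP^{2}\wedge\RP^{2};\Z/2)$. This is precisely the distinction the paper's own proof draws: it lists the three essential classes of $[P^{2n-2}(2),P^{2n-3}(2)]$ and separates their cofibres by the $Sq^{2}$-action, recording in its case (2) that the cofibre of $h_{1}=\bar\eta\circ\mathrm{pinch}$ has $Sq^{2}_{*}(y_{2n-2})=0$ --- the opposite of what your identification would require. (Your difficulty does expose a real tension in the source: observation (3) before Lemma~\ref{lemma3.3} assigns to $\bar C^{2n-1}$ the full Steenrod structure with $Sq^{2}_{*}(x_{2n-2})=x_{2n-4}$, which is what the paper's uniqueness argument actually uses, but that is not the structure produced by the displayed attaching map $\bar\eta\circ q$.) In any case, an argument that identifies $\eta\wedge\id$ with $\bar\eta\circ q$ up to re-choice of the lift cannot succeed, so the proposal does not establish the lemma.
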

\begin{proof}
We only need to show that there is a unique $4$-cell complex having the same homology as $\bar C^{2n+1}$ with the same Steenrod module structure.

Consider the homotopy classed $[P^{2n-2}(2), P^{2n-3}(2)]$. There is a short exact sequence
\begin{diagram}
\pi_{2n-3}(P^{2n-3}(2))=\Z/2&\lOnto& [P^{2n-2}(2), P^{2n-3}(2)]&\lInto &\pi_{2n-2}(P^{2n-3}(2))/2.\\
\end{diagram}
There are $3$ essential homotopy classes in $[P^{2n-2}(2), P^{2n-3}(2)]$ given as follows
\begin{enumerate}
\item $\eta\wedge\id\colon P^{2n-2}(2)\to P^{2n-3}(2)$. The homology of its cofibre has the same structure as $\tilde H_*(\Sigma^{2n-7} \CP^2\wedge \RP^2)$.
\item The composite
$h_1\colon P^{2n-2}(2)\rTo^{\mathrm{pinch}}S^{2n-2} \rTo^{\bar\eta} P^{2n-3}(2)$. The reduced mod $2$ homology of the cofibre $C_{h_1}$ has a basis $$\{y_{2n-1},y_{2n-2},y_{2n-3}, y_{2n-4}\}$$ with $Sq^1_*(y_{2n-1})=y_{2n-2}, Sq^1_*(y_{2n-3})=y_{2n-4}, Sq^2_*(y_{2n-1})=y_{2n-3}$ and $Sq^2_*(y_{2n-2})=0$.  Here $Sq^2_*(y_{2n-2})=0$ because $h_1|_{S^{2n-3}}$ is null homotopic.
\item The composite $h_2\colon P^{2n-2}(2)\rTo^{\tilde\eta} S^{2n-4}\rInto P^{2n-3}(2)$, where $\tilde\eta$ is an extension of $\eta\colon S^{2n-3}\to S^{2n-4}$. The reduced mod $2$ homology of the cofibre $C_{h_2}$ has a basis $$\{z_{2n-1},z_{2n-2},z_{2n-3}, z_{2n-4}\}$$ with $Sq^1_*(z_{2n-1})=z_{2n-2}, Sq^1_*(z_{2n-3})=z_{2n-4}, Sq^2_*(y_{2n-2})=y_{2n-4}$ and $Sq^2_*(y_{2n-1})=0$. Here $Sq^2_*(y_{2n-2})=0$ because $$P^{2n-2}(2)\rTo^{h_2} P^{2n-3}(2)\to S^{2n-3}$$ is null homotopic.

\end{enumerate}
The proof is finished.

\end{proof}

The following lemma will be useful.
\begin{lem}\label{lemma3.5}
There is a commutative diagram
\begin{diagram}
C^{2n} &\rTo^{p_3}& P^{2n}(2)\\
\dTo &  &\dTo>{\eta\wedge\id}\\
P^{2n-1}(8)&\rTo^{\rho}&P^{2n-1}(2),\\
\end{diagram}
where $\rho_*\colon H_{2n-2}(P^{2n-1}(8);\Z/2)\to H_{2n-2}(P^{2n-2}(2);\Z/2)$ is an isomorphism.
\end{lem}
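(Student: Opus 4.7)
The plan is to construct the map $\rho\colon P^{2n-1}(8)\to P^{2n-1}(2)$ and a lift $\psi\colon P^{2n}(2)\to P^{2n-1}(8)$ of $\eta\wedge\id$ along $\rho$, and then to take the left vertical arrow of the diagram to be the composite $\psi\circ p_3$. Commutativity is then tautological:
$$\rho\circ(\psi\circ p_3)=(\rho\circ\psi)\circ p_3\simeq(\eta\wedge\id)\circ p_3.$$

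For $\rho$, I would send the bottom sphere $S^{2n-2}$ of $P^{2n-1}(8)$ by the identity into the bottom sphere $S^{2n-2}$ of $P^{2n-1}(2)$ and extend over the top $(2n-1)$-cell. The obstruction is the composite $\iota\circ[8]\colon S^{2n-2}\to P^{2n-1}(2)$; since $[8]=[2]\circ[4]$ and $\iota\circ[2]$ is null-homotopic by the defining cofibre sequence $S^{2n-2}\xrightarrow{[2]}S^{2n-2}\to P^{2n-1}(2)$, the extension exists. By construction $\rho_*$ is an isomorphism on $H_{2n-2}(-;\Z/2)$, yielding the asserted property. For $\psi$, I would send the bottom sphere $S^{2n-1}$ of $P^{2n}(2)$ via $\eta\colon S^{2n-1}\to S^{2n-2}$ into $P^{2n-1}(8)$ and extend over the top $(2n)$-cell; the obstruction $\eta\circ[2]=2\eta$ vanishes in $\pi_{2n-1}(S^{2n-2})=\Z/2$, so $\psi$ exists.

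The main obstacle is to ensure that $\rho\circ\psi\simeq\eta\wedge\id$, not merely that the two maps agree on the bottom sphere $S^{2n-1}$. Both maps restrict to $\eta$ on $S^{2n-1}$, so their difference factors through the top cell and represents an element of $\pi_{2n}(P^{2n-1}(2))$. Using the cofibre sequence $P^{2n-1}(4)\xrightarrow{i}P^{2n-1}(8)\xrightarrow{\rho}P^{2n-1}(2)$ obtained by shifting Lemma~\ref{lemma3.3}, I would identify the indeterminacy of the lift with the image of $\rho_*\colon \pi_{2n}(P^{2n-1}(8))\to \pi_{2n}(P^{2n-1}(2))$ and show, by a direct computation of these homotopy groups in the stable range, that this image contains the obstruction. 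The factorization $[2]\simeq \iota\circ\eta\circ\mathrm{pinch}$ on $P^{2n-1}(2)$ from Lemma~\ref{lemma3.1} together with the relation $2\eta=0$ are the key inputs making the obstruction computable. Once $\psi$ is adjusted so that $\rho\circ\psi\simeq\eta\wedge\id$, setting the left vertical map to $\psi\circ p_3$ completes the proof.
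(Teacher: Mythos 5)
Your construction breaks down exactly at the step you flagged as ``the main obstacle'': the map $\eta\wedge\id\colon P^{2n}(2)\to P^{2n-1}(2)$ does \emph{not} lift through $\rho\colon P^{2n-1}(8)\to P^{2n-1}(2)$, so no choice of $\psi$ (nor any adjustment of it by the indeterminacy) can make $\rho\circ\psi\simeq\eta\wedge\id$. To see this, compose with the pinch map $p_4\colon P^{2n-1}(2)\to S^{2n-1}$. Because your $\rho$ restricts to a degree--one map on the bottom cells, it necessarily covers the degree $4$ map on the top cells (this is exactly diagram~(\ref{equation3.3}) in the paper: the possible extensions differ by $\beta$, and $p_{4*}[\beta]=0$). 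Hence for any $\psi\colon P^{2n}(2)\to P^{2n-1}(8)$ one has $p_4\circ\rho\circ\psi=[4]\circ q_8\circ\psi=4\cdot(q_8\circ\psi)=0$, since we are in the stable range and $[P^{2n}(2),S^{2n-1}]$ is annihilated by $4$ (indeed $[4]_{P^{2n}(2)}\simeq *$). On the other hand $p_4\circ(\eta\wedge\id)=\eta\circ\mathrm{pinch}$, which is nonzero in $[P^{2n}(2),S^{2n-1}]$ because $\eta\circ[2]=2\eta=0$ forces $\mathrm{pinch}^*$ to be injective on $\pi_{2n}(S^{2n-1})=\Z/2$. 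Equivalently, the computation you deferred comes out the wrong way: $\pi_{2n}(P^{2n-1}(2))\cong\Z/4$ generated by a lift of $\eta$, the image of $\rho_*\colon\pi_{2n}(P^{2n-1}(8))\to\pi_{2n}(P^{2n-1}(2))$ is only the order--two subgroup $i_*\langle\eta^2\rangle$, and the difference class you need to absorb is a generator (it survives to $\pi_{2n}(S^{2n-1})$). So the obstruction is \emph{not} in the image, and the approach cannot be repaired by re-choosing $\psi$ or $\rho$: any $\rho$ inducing an isomorphism on $H_{2n-2}(-;\Z/2)$ has this top-cell behaviour.

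The point you are missing is that the lemma only asserts a factorization of the \emph{composite} $(\eta\wedge\id)\circ p_3$, and the precomposition with $p_3$ is essential: the left vertical arrow in the paper's diagram does not factor through $P^{2n}(2)$. The paper obtains it as the quotient map $C^{2n}\to C^{2n}/\bar C^{2n}$, where $\bar C^{2n}\simeq\Sigma^{2n-6}\CP^2\wedge\RP^2$ is the subcomplex produced in Lemma~\ref{lemma3.4}; by the observation preceding Lemma~\ref{lemma3.3} together with Lemma~\ref{lemma3.3} itself, this quotient is $P^{2n-1}(8)$, and $\rho$ is the induced map of cofibres in the ladder
$\bar C^{2n}\to C^{2n}\to P^{2n-1}(8)$ over $\bar C^{2n}\to P^{2n}(2)\to P^{2n-1}(2)$, the bottom cofibration being identified with $\eta\wedge\id$ via Lemma~\ref{lemma3.4}. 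The extra input that makes this work is the mod~$4$ Bockstein $\beta_2(b)=a$ in $C^{2n}$ (Lemma~\ref{lemma3.2}): it is this order--$8$ phenomenon in $C^{2n}$, invisible in $P^{2n}(2)$ itself, that permits the top cell to map to $P^{2n-1}(8)$ compatibly with $[4]$ on $S^{2n-1}$. Your constructions of $\rho$ and of the extension $\psi$ are individually fine, but they cannot be assembled into the required square.
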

\begin{proof}
Consider the commutative diagram of cofibre sequences
\begin{diagram}
\bar C^{2n}&\rTo^{j_1\circ j_2}& C^{2n} & \rTo&P^{2n-1}(8)\\
\dEq&& \dTo>{\mathrm{pinch}}&&\dTo>{\rho}\\
\bar C^{2n}&\rTo& P^{2n}(2)&\rTo^{h}& P^{2n-1}(2).
\end{diagram}
By Lemma~\ref{lemma3.4}, $h\simeq\eta\wedge\id$ and hence the result.
\end{proof}
Let us first prove the special case when the Whitehead square is divisible by $2$.
\begin{thm}\label{theorem3.6}
Let $n+1\equiv0\mod{4}$ with $n+1>5$. Suppose that the Whitehead square $\omega_n$ is divisible by $2$. Then the $4$-power map $4|\colon \Omega^2P^{n+1}(2) \to \Omega^2P^{n+1}(2)$ restricted to the skeleton $\sk_{4(n-1)-1}$is null homotopic.
\end{thm}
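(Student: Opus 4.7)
The plan is to apply Theorem~\ref{theorem2.3} with $X = P^{n-1}(2)$ (so $\Sigma^{2}X = P^{n+1}(2)$), reducing the statement to the null-homotopy of a single three-fold composite, and then to peel away the obstructions by combining the structural identities of Section~\ref{section3} with the hypothesis on $\omega_{n}$. By Theorem~\ref{theorem2.3}, it suffices to show that the composite
\[
\Sigma \sk_{4(n-1)-1}(\Omega^{2}P^{n+1}(2)) \xrightarrow{\Sigma \tilde H_{2}} \Sigma D_{2}(I^{2}; P^{n-1}(2)) \xrightarrow{\bar\sigma_{2}} P^{n}(2)^{\wedge 2} \xrightarrow{2\cdot S_{2}} \Omega P^{n+1}(2)
\]
is null homotopic.

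The first step unwinds the last two arrows. Lemma~\ref{lemma3.1} factors $[2]_{P^{n}(2)^{\wedge 2}}$ as $i\circ(\eta\wedge\id)\circ p_{2}$, so $2\cdot S_{2} = S_{2}\circ i\circ(\eta\wedge\id)\circ p_{2}$. Combining this with $p_{2}\circ\bar\sigma_{2} = p_{3}\circ\phi$ from diagram~(\ref{equation3.1}) and $(\eta\wedge\id)\circ p_{3} = \rho\circ\psi$ from Lemma~\ref{lemma3.5}, the composite becomes $S_{2}\circ i\circ\rho\circ\psi\circ\phi\circ\Sigma\tilde H_{2}$. Now invoke the hypothesis: write $\omega_{n}=2\alpha$ and let $j\colon S^{n}\hookrightarrow P^{n+1}(2)$ be the bottom-cell inclusion. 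By Lemma~\ref{lemma3.1} together with $q\circ j=0$, one computes
\[
j_{*}\omega_{n} \;=\; 2\,j_{*}\alpha \;=\; [2]_{P^{n+1}(2)}\circ(j_{*}\alpha) \;=\; u\circ\eta\circ q\circ j\circ\alpha \;=\; 0
\]
in $\pi_{2n-1}(P^{n+1}(2))$. Since the adjoint of the restriction of $S_{2}\circ i$ to the bottom cell $S^{2n-2}\subset P^{2n-1}(2)$ is precisely the Whitehead square $[j,j]=j_{*}\omega_{n}$, this restriction is null, so $S_{2}\circ i$ extends through $P^{2n-1}(2)/S^{2n-2}=S^{2n-1}$, yielding a factorization $S_{2}\circ i \simeq \tilde g\circ q_{0}$ with $q_{0}$ the pinch and some $\tilde g\in\pi_{2n-1}(\Omega P^{n+1}(2))$.

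The remaining and technically heaviest step is to verify that $\tilde g\circ q_{0}\circ\rho\circ\psi\circ\phi\circ\Sigma\tilde H_{2}\simeq *$. Another application of diagram~(\ref{equation3.1}) and Lemma~\ref{lemma3.5}, combined with the elementary identity $q_{0}\circ(\eta\wedge\id)\simeq\eta\circ p_{2n}$ (where $p_{2n}\colon P^{2n}(2)\to S^{2n}$ is the pinch to the top cell), collapses $q_{0}\circ\rho\circ\psi\circ\phi$ to $\eta\circ\pi_{\mathrm{top}}$, with $\pi_{\mathrm{top}}\colon\Sigma D_{2}\to S^{2n}$ pinching to the top cell. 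Hence the whole composite reduces to $\tilde g\circ\eta\circ\pi_{\mathrm{top}}\circ\Sigma\tilde H_{2}$, and it will suffice to show $\tilde g\circ\eta=0$ in $\pi_{2n+1}(P^{n+1}(2))$. I expect this to follow from a Hilton-type analysis of the adjoint $\tilde g^{\vee}\in\pi_{2n}(P^{n+1}(2))$, which represents the top-cell contribution of the generalized Whitehead product $[j,\id_{P^{n+1}(2)}]$: the divisibility hypothesis places $\tilde g^{\vee}$ in $j_{*}\pi_{2n}(S^{n})$, after which Lemma~\ref{lemma3.1} together with $q\circ j=0$ kills the $\eta$-composition. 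This final cellular cancellation, which essentially exploits $\omega_{n}=2\alpha$, is the technical crux of the argument.
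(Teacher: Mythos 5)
Your opening moves match the paper's: you invoke Theorem~\ref{theorem2.3}, use Lemma~\ref{lemma3.1} to replace $[2]$ on $P^{n}(2)^{\wedge 2}$ by the composite through $\eta\wedge\id$, and use the divisibility of $\omega_{n}$ to factor $S_{2}|$ through the pinch $q_{0}\colon P^{2n-1}(2)\to S^{2n-1}$, writing $S_{2}|\simeq\tilde g\circ q_{0}$. One correction there: the identity $2\,j_{*}\alpha=[2]_{P^{n+1}(2)}\circ(j_{*}\alpha)$ is not valid in general --- composing with a degree map on a co-$H$ space differs from multiplication in the homotopy group by Whitehead-product/Hopf-invariant terms. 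The honest route is that $H(\alpha)$ is $2$-torsion in $\Z$ (since $2H(\alpha)=H(\omega_{n})=0$ for $n$ odd), hence $\alpha$ is a suspension, and then $2(j\circ\alpha)=(2j)\circ\alpha=0$ because $2j=0$ in $\pi_{n}(P^{n+1}(2))=\Z/2$.

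The genuine gap is in your last step. You reduce everything to the single claim $\tilde g\circ\eta=0$, and the argument you sketch for it --- that $\tilde g^{\vee}$ lies in $j_{*}\pi_{2n}(S^{n})$ and that $q\circ j=0$ then ``kills the $\eta$-composition'' --- is not a proof: there is no reason for the coextension $\tilde g^{\vee}$ of the generalized Whitehead product to factor through the bottom cell, and even if it did, $j_{*}(\beta)\circ\eta=j_{*}(\beta\circ\eta)$ has no reason to vanish. Worse, the reduction discards exactly the information the proof needs: by collapsing $\Sigma D_{2}$ onto its top sphere $S^{2n}$ you forget how that top cell is attached. The paper's Lemmas~\ref{lemma3.2}--\ref{lemma3.5} compute precisely this attaching data (the $P^{2n-2}(4)$ subquotient of $\Sigma D_{2}$, the identification of the relevant cofibre with a mod $8$ Moore space, and diagram~(\ref{equation3.3}) giving $q_{0}\circ\rho=[4]\circ\mathrm{pinch}$), with the upshot that ``$\eta$ on the top cell,'' once precomposed with the structure maps out of $\Sigma D_{2}$, becomes multiplication by $4$ on $S^{2n-1}$. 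The composite is then $4\tilde g\circ(\cdots)$, which vanishes because, by Cohen--Wu, the adjoint $\tilde g^{\vee}\in\pi_{2n}(P^{n+1}(2))$ has order $4$ exactly when $n+1\equiv 0\pmod 4$. Note that your final step never uses the congruence on $n$, whereas it is essential (it is what rules out order $8$); given the known $\Z/8$-summands for other residues, an argument that bypasses both this hypothesis and the mod $8$ attaching-map analysis cannot be correct as stated.
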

\begin{proof}
By Theorem~\ref{theorem2.3}, it suffices to show that the composite
$$
\Sigma D_2\rTo^{\bar\sigma_2} (P^{n}(2))^{\wedge 2}\rTo^{[2]}(P^{n}(2))^{\wedge 2}\rTo^{S_2} \Omega P^{n+1}(2)
$$
is null homotopic. By Lemma~\ref{lemma3.1}, it suffices to prove that the composite
\begin{equation}\label{equation3.2}
\Sigma D_2\rTo^{\bar\sigma_2} (P^{n}(2)^{\wedge 2}\rTo^{\mathrm{pinch}}P^{2n}(2)\rTo^{\eta\wedge\id} P^{2n-1}(2)\rTo^{S_2|}\Omega P^{n+1}(2)
\end{equation}
is null homotopic.

Since $\omega_n$ is divisible by $2$,
$$
S_2|_{S^{2n-2}}\colon S^{2n-2}\to \Omega P^{n+1}(2)
$$
is null homotopic. Thus $S_2|\colon P^{2n-1}(2)\to \Omega P^{n+1}(2)$ factors through $S^{2n-1}$.

Let $\rho\colon P^{2n-1}(8)\to P^{2n-1}(2)$ be the map in Lemma~\ref{lemma3.5}. We show that there is a commutative diagram of cofibre sequences
\begin{equation}\label{equation3.3}
\begin{diagram}
S^{2n-2}&\rInto& P^{2n-1}(8)&\rTo &S^{2n-1}\\
\dEq&&\dTo>{\rho}&&\dTo>{[4]}\\
S^{2n-2}&\rTo& P^{2n-1}(2)&\rTo&S^{2n-1}.\\
\end{diagram}
\end{equation}
There is a short exact sequence
\begin{diagram}
\pi_{2n-2}(P^{2n-1}(2))=\Z/2&\lOnto&[P^{2n-1}(8), P^{2n-1}(2)]&\lInto&\pi_{2n-1}(P^{2n-1}(2))=\Z/2\\
\end{diagram}
Let $\alpha\colon P^{2n-1}(8)\to P^{2n-1}(2)$ be an extension of the inclusion map $S^{2n-2}\to P^{2n-1}(2)$. Then $2[\alpha]=0$. Thus
$$
[P^{2n-1}(8), P^{2n-1}(2)]=\Z/2\oplus \Z/2.
$$
Let $\beta$ be the composite
$$
P^{2n-1}(8)\rTo^{\mathrm{pinch}} S^{2n-1}\rTo^{\eta} S^{2n-2}\rInto P^{2n-2}(2).
$$
Then $[P^{2n-1}(8), P^{2n-1}(2)]$ is generated by $\alpha$ and $\beta$. We can make a choice of $\alpha$ from the commutative diagram
\begin{diagram}
S^{2n-2}&\rTo^{[8]}&S^{2n-2}&\rTo&P^{2n-1}(8)&\rTo& S^{2n-1}\\
\dTo>{[4]}&&\dEq&&\dTo>{\alpha}&&\dTo{[4]}\\
S^{2n-2}&\rTo^{[2]}&S^{2n-2}& \rTo&P^{2n-1}(2)&\rTo^{p_4}& S^{2n-1}.\\
\end{diagram}
Then $[\rho]=[\alpha]$ or $[\alpha+\beta]$. Since $p_{4*}[\beta]=0$. Diagram~(\ref{equation3.3}) holds.

Now the assertion follows from the commutative diagram
\begin{diagram}
C^{2n}&\rTo^{p_3}& P^{2n}(2)&    & \\
\dTo&&\dTo>{\eta\wedge\id} &&\\
P^{2n-1}(8)&\rTo^{\rho}&P^{2n-1}(2)&\rTo^{S_2|}&\Omega P^{n+1}(2)\\
\dTo &&\dTo &\ruTo>{\lambda}&\\
S^{2n-1}&\rTo^{[4]}&S^{2n-1}& &\\
\end{diagram}
together with the fact~\cite{Cohen-Wu} that $[\lambda]\in \pi_{2n}(P^{n+1}(2))$ is of order $4$ when $n+1\equiv0 \mod{4}$.

\end{proof}

\section{Some Lemmas on $P^{2n}(2)$}\label{section4}

In this section, we give some lemmas related to the Whitehead products.

\begin{lem}\label{lemma4.1}
Let $j_{2n+1}$ be the composite
$$
\RP^{2n}\rInto SO(2n+1)\rTo\Omega^{2n+1}S^{2n+1}.
$$
Then
$$
\Omega^{2n+1}([2])\circ j_{2n+1}\simeq 2\circ j_{2n+1}.
$$
\end{lem}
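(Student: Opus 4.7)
The plan is to verify the identity by direct inspection, exploiting the fact that the image of the $J$-homomorphism $SO(2n+1)\to \Omega^{2n+1}S^{2n+1}$ consists literally of suspensions. Writing $S^{2n+1}=\Sigma S^{2n}$ in suspension coordinates $(t,x)$, the degree-$2$ map has the explicit form $[2]=\mu\circ\Delta$ where $\Delta$ pinches $S^{2n+1}$ at $t=1/2$ into a wedge and $\mu$ folds, so $[2](t,x)=(2t,x)$ for $t\le 1/2$ and $(2t-1,x)$ for $t\ge 1/2$. The $J$-homomorphism sends $\alpha\in SO(2n+1)$ to the suspension $\Sigma\alpha\colon \Sigma S^{2n}\to \Sigma S^{2n}$, $(t,x)\mapsto (t,\alpha x)$.

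The key computation is then direct:
\begin{equation*}
([2]\circ \Sigma\alpha)(t,x)=\begin{cases}(2t,\alpha x),&t\le 1/2,\\(2t-1,\alpha x),&t\ge 1/2,\end{cases}
\end{equation*}
and one recognizes the right-hand side as the loop concatenation $\Sigma\alpha *_t \Sigma\alpha$ in the $H$-space structure on $\Omega^{2n+1}S^{2n+1}$ coming from concatenation in the suspension coordinate $t$ of $S^{2n+1}$. This identity is pointwise in $\alpha$ and natural, so it yields an equality $\Omega^{2n+1}([2])\circ J = (2_{*_t})\circ J$ of parameterized maps $SO(2n+1)\to \Omega^{2n+1}S^{2n+1}$, where $2_{*_t}$ denotes doubling in the $*_t$-structure.

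Since $2n+1\ge 2$, the Eckmann--Hilton argument identifies $2_{*_t}\simeq 2$ as self-maps of $\Omega^{2n+1}S^{2n+1}$ (each of the $2n+1$ equivalent $H$-space structures gives the same power map up to homotopy), so $\Omega^{2n+1}([2])\circ J\simeq 2\circ J$ on all of $SO(2n+1)$, and restricting along the reflection inclusion $\RP^{2n}\hookrightarrow SO(2n+1)$ gives the lemma. The conceptual point is that the James--Hopf corrections which in general distinguish $\Omega^{2n+1}([2])$ from $2$ on $\Omega^{2n+1}S^{2n+1}$ must vanish on the image of $J$, because its elements are genuine suspensions; no serious obstacle is anticipated, and the only care required is to keep the naturality of the pointwise formula in $\alpha$.
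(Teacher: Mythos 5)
There is a genuine gap, and it sits exactly where the content of the lemma lives. Your key identity $[2]\circ\Sigma\alpha=\Sigma\alpha *_t\Sigma\alpha$ uses two incompatible models at once: the pinch--fold formula for $[2]$ is only continuous on the \emph{reduced} suspension $S^{2n+1}=S^1\wedge S^{2n}$ (at $t=1/2$ the two branches land at the two different poles, which must therefore be identified to the basepoint), whereas $J(\alpha)=\Sigma\alpha$ for $\alpha\in SO(2n+1)$ only exists on the \emph{unreduced} suspension, because the rotations $\alpha$ have no common fixed point on $S^{2n}$ and hence $\Sigma\alpha$ does not descend to the reduced model. Indeed no degree~$2$ self-map of $S^{2n+1}$ can preserve the suspension coordinate (a map of the form $(t,x)\mapsto(\phi(t),g_t(x))$ with $g_t\in SO(2n+1)$ has degree $\pm1$), so the pointwise equality you write down does not hold for any actual choice of $[2]$. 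After repairing the models, what your computation really shows is that $[2]\circ J(\alpha)$ equals the \emph{pointwise} square of the free (unbased) map $S^{2n}\to\Omega S^{2n+1}$, $x\mapsto E(\alpha x)$; but the power map $2$ on $\Omega^{2n+1}S^{2n+1}=\mathrm{Map}_*(S^{2n},\Omega S^{2n+1})$ is the pointwise square of \emph{based} maps, and Eckmann--Hilton does not identify the two, because the equatorial pinch you are using is not a based co-$H$ structure (the collapsed sphere misses the basepoint) and $x\mapsto E(\alpha x)$ is not based. The discrepancy between these two squarings is measured precisely by the Whitehead-product correction $\Omega\omega_{2n+1}\circ H_2$ that the lemma is about. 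A decisive test: run your argument verbatim for $SO(2)\to\Omega^2S^2$, whose adjoint is the Hopf map $\eta$. Every rotation of $S^1$ suspends to a $t$-preserving self-map of $S^2$, so your reasoning would give $[2]\circ\eta=2\eta$; but $[2]\circ\eta=4\eta\neq 2\eta$ in $\pi_3(S^2)=\Z$. Nothing in your argument uses that $2n$ is even or that you restricted to $\RP^{2n}\subset SO(2n+1)$, so it cannot be correct as stated.

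Your closing heuristic --- that the James--Hopf corrections vanish on the image of $J$ because its elements are suspensions --- is also contradicted by the actual situation: the paper's proof computes $H_2\circ j'_{2n+1}=\theta_{4n}\circ\Sigma^{2n}p_{2n}$, which is essential (nonzero on mod $2$ homology in degree $4n$), so the adjoint of $j_{2n+1}$ is \emph{not} a suspension in the based, parameterized sense; the family $\ell\mapsto\alpha_\ell x_0$ is an essential map $\RP^{2n}\to S^{2n}$, which is exactly the obstruction to rigidifying the family into genuine suspensions. What actually saves the lemma is not $H_2\circ j'_{2n+1}\simeq *$ but the vanishing of the composite $\Omega\omega_{2n+1}\circ H_2\circ j'_{2n+1}$: the paper identifies it with $j'_{2n+1}\circ\Sigma^{2n}(q_{2n}\circ p_{2n})$ via the EHP sequence and James's theorem, and then shows $\Sigma^2(q_{2n}\circ p_{2n})\simeq *$ by a geometric argument with the Hopf construction on $\Sigma\RP^\infty\wedge\RP^\infty$ --- a fact special to even-dimensional real projective spaces. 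Some argument of this kind, using the specific geometry of $\RP^{2n}$, appears to be unavoidable.
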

\begin{proof}
By ~\cite[Proposition 4.3]{Cohen}, the maps $\Omega[2], 2\colon \Omega S^{2n+1}\to \Omega S^{2n+1}$ differ by the homotopy class represented by the composite
$$
\Omega S^{2n+1}\rTo^{H_2}\Omega S^{4n+1}\rTo^{\Omega\omega_{2n+1}}\Omega S^{2n+1}.
$$
From the commutative diagram
\begin{diagram}
\RP^{k}&\rInto&\RP^{k+1}\\
\dTo&&\dTo\\
\Omega^{k+1}S^{k+1}&\rTo&\Omega^{k+2}S^{k+2},\\
\end{diagram}
there is a commutative diagram
\begin{diagram}
S^{2k+1}&\rTo^{\Sigma^{k+1}q_k} &\Sigma^{k+1}\RP^{k}&\rInto&\Sigma^{k+1}\RP^{k+1}&\rTo^{\Sigma{k+1} p_{k+1}}& S^{2k+2}\\
\dTo&&\dTo>{j_{k+1}}&&\dTo>{j_{k+2}}&&\dTo>{\theta_{k+2}}\\
\Omega^2S^{2k+3}&\rTo^{P}&S^{k+1}&\rTo^{E}&\Omega S^{k+2}&\rTo^{H}&\Omega S^{2k+3},\\
\end{diagram}
where $q_k\colon S^k\to \RP^k$ is the projection map, $p_{k+1}\colon \RP^{k+1}\to S^{k+1}$ is the pinch map,  the top row is a cofibre sequence, and the bottom row is the EHP sequence. The map
$$
\theta_{2k+2}\colon H_{2k+2}(S^{2k+2})\longrightarrow H_{2k+2}(\Omega S^{2k+3})
$$
is an isomorphism~\cite[Theorem (1.1)]{James}.  It follows that there is a commutative diagram
\begin{diagram}
\Sigma^{2n}\RP^{2n}&\rTo^{\Sigma^{2n}p_{2n}}&S^{4n}&\rTo^{\Sigma^{2n} q_{2n}}&\Sigma^{2n}\RP^{2n}\\
\dTo>{j'_{2n+1}}&&\dInto>{\theta_{4n}}&&\dTo>{j'_{2n+1}}\\
\Omega S^{2n+1}&\rTo^{H}&\Omega S^{4n+1}&\rTo^{\Omega \omega_{2n+1}}&\Omega S^{2n+1}.\\
\end{diagram}
We check that the composite
$$
\Sigma^2\RP^{2n} \rTo^{\Sigma^2 p_{2n}} S^{2n+2}\rTo^{\Sigma^2 q_{2n}}\Sigma^2\RP^{2n}
$$
is null homotopic. If so, the assertion will follow from the above commutative diagram.

Consider the Hopf map
$$
H\colon \Sigma \RP^{\infty}\wedge \RP^{\infty} \longrightarrow \Sigma\RP^{\infty}.
$$
The composite
$$
\Sigma \RP^{2n}\wedge \RP^1\rInto \Sigma\RP^{\infty}\wedge \RP^{\infty}\rTo^{H}\Sigma\RP^{\infty}
$$
maps into $\Sigma \RP^{2n+1}$ by the skeleton reasons. Let
$$
f\colon \Sigma^2\RP^{2n}=\Sigma\RP^{2n}\wedge\RP^1\longrightarrow \Sigma\RP^{2n+1}
$$
be the resulting map. Recall that the mod $2$ homology $H_*(\RP^{\infty})=\Gamma(u)$ with $|u|=1$ is the divided algebra. The Hopf map $H$ induces
$$
H_*(\Sigma (\gamma_{2n}(u)\otimes \gamma_1(u)))=\Sigma\gamma_{2n+1}(u).
$$
Thus
$$
f_*\colon H_{2n+2}(\Sigma^2\RP^{2n};\Z/2)\longrightarrow H_{2n+2}(\Sigma \RP^{2n+1};\Z/2)
$$
is an isomorphism. It follows that the pinch map $\Sigma^2 p_{2n}\colon \Sigma^2\RP^{2n}\to S^{2n+2}$ lifts to $\Sigma \RP^{2n+1}$ by $f$. From the cofibre sequence
$$
\Sigma \RP^{2n+1}\rTo^{\Sigma p_{2n+1}}S^{2n+2}\rTo^{\Sigma^2 q_{2n}}\Sigma^2\RP^{2n},
$$
we obtain that $\Sigma^2(q_{2n}\circ p_{2n})$ is null homotopic.
\end{proof}

\begin{lem}\label{lemma4.2}
There is a homotopy commutative diagram
\begin{diagram}
\Sigma^{2n+1}\RP^{2n}& & \rTo^{[2]} &&\Sigma^{2n+1}\RP^{2n}& & \rTo& &\RP^{2n}\wedge P^{2n+2}(2) &&\\
\dTo>{j'_{2n+1}}&\rdInto&&&  \dTo>{j'_{2n+1}}&\rdInto&&&\dTo>{k_{2n+2}}&\rdInto&\\
  & &\Sigma^{2n+1}\RP^{2n+2}&\rTo && &\Sigma^{2n+1}\RP^{2n+2}&\rTo &&& \RP^{2n+2}\wedge P^{2n+2}(2)\\
   &&\dTo>{j'_{2n+3}}&&&&\dTo>{j'_{2n+3}}&&&&\dTo>{k'_{2n+4}}\\
 S^{2n+1}&\rTo^{[2]}&&&S^{2n+1}&\rTo&&&P^{2n+2}(2)&&\\
 &\rdTo^{\Sigma^2}& &&&\rdTo>{\Sigma^2}&&&&\rdTo&\\
  &&\Omega^2S^{2n+1}& \rTo^{\Omega^2([2])}&&&\Omega^2S^{2n+3}&\rTo&&&\Omega^2P^{2n+4}(2)\\
\end{diagram}

\end{lem}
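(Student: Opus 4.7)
The plan is to realize each of the three upper horizontal rows as the cofibre sequence of a multiplication-by-$2$ map, realize the bottom row as $\Omega^{2}$ applied to such a cofibre sequence, and then assemble the diagram square-by-square, extracting the maps $k_{2n+2}$ and $k'_{2n+4}$ by functoriality of the mapping cone. The only nontrivial input will be Lemma~\ref{lemma4.1}.

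First, for any pointed CW-complex $Y$ the map $[2]\colon\Sigma Y\to\Sigma Y$ equals $\id_{Y}\wedge[2]_{S^{1}}$, so its cofibre is naturally $Y\wedge P^{2}(2)$. Specialising $Y=\Sigma^{2n}\RP^{2n}$, $\Sigma^{2n}\RP^{2n+2}$, and $S^{2n}$ produces exactly the three receiving spaces $\RP^{2n}\wedge P^{2n+2}(2)$, $\RP^{2n+2}\wedge P^{2n+2}(2)$, and $P^{2n+2}(2)$ that appear in rows 1, 3 and 5. The bottom row is obtained by applying the functor $\Omega^{2}$ to the cofibre sequence $S^{2n+3}\xrightarrow{[2]}S^{2n+3}\to P^{2n+4}(2)$.

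The vertical arrows are then the natural candidates: the diagonal inclusions $\rdInto$ are suspensions of $\RP^{2n}\hookrightarrow\RP^{2n+2}$; the arrows $j'_{2n+1}$ and $j'_{2n+3}$ are the partial adjoints of the Stiefel-type maps $j_{2n+1}$ and $j_{2n+3}$ used in Lemma~\ref{lemma4.1}; and $\Sigma^{2}\colon S^{2n+1}\to\Omega^{2}S^{2n+3}$ is the unit of the $(\Sigma^{2},\Omega^{2})$-adjunction. The squares built only from inclusions or from the unit $\Sigma^{2}$ commute by naturality of $[2]$ and of the unit, respectively. Each such commuting square of $[2]$-maps between adjacent cofibre sequences induces a map on cofibres by functoriality of the mapping cone; taking these induced maps as the definitions of $k_{2n+2}$ and $k'_{2n+4}$ makes the right-hand squares commute strictly by construction.

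The main obstacle is the commutativity of the squares involving the Hopf-type maps $j'_{2n+1}$ and $j'_{2n+3}$ into $S^{2n+1}$ (equivalently, via $\Sigma^{2}$, into $\Omega^{2}S^{2n+3}$): because these are not themselves suspension maps, their compatibility with $[2]$ is not formal. Here Lemma~\ref{lemma4.1} is precisely what is needed: the homotopy $\Omega^{2n+1}([2])\circ j_{2n+1}\simeq 2\circ j_{2n+1}$, translated through the $(2n+1)$-fold adjunction, becomes the required commutation of $[2]\colon S^{2n+1}\to S^{2n+1}$ with $j'_{2n+1}$, and the parallel statement for $j_{2n+3}$ handles $j'_{2n+3}$. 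Granted this compatibility, every remaining verification reduces to routine functoriality of the mapping cone, yielding the asserted homotopy commutative diagram.
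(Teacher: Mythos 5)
Your proposal is correct and follows essentially the same route as the paper: the squares involving $j'_{2n+1}$ and $j'_{2n+3}$ (the left cube) are handled by translating Lemma~\ref{lemma4.1} through the adjunction, and the maps $k_{2n+2}$, $k'_{2n+4}$ are then obtained by the functoriality of cofibre sequences, each row being the cofibration of a degree-$2$ map. The paper states this in two lines; your write-up just makes the identification of the cofibres as smash products with $P^2(2)$ and the adjunction step explicit.
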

\begin{proof}
By Lemma~\ref{lemma4.1}, the left cube is homotopy commutative. By using the property of cofibre sequences, there is a map $k_{2n+2}\to \RP^{2n}\wedge P^{2n+2}(2)\to P^{2n+2}(2)$ so that the right cube in the diagram commutes up to homotopy.
\end{proof}

Mark Mahowald has a result~\cite[Theorem (1.1.2a)]{Mahowald0}  that $[\iota_{4n-1}, \eta_{4n-1}]=0$. For mod $2$ Moore spaces, we have the following lemma.

\begin{lem}\label{lemma4.3}
There exists a map $\delta_{4n}\colon P^{8n-2}(2)\to P^{4n}(2)$ with the following properties:
\begin{enumerate}
\item There is a homotopy commutative diagram
\begin{diagram}
P^{8n-2}(2)&\rTo^{\delta_{4n}}& P^{4n}(2)\\
\uInto&&\uInto\\
S^{8n-3}&\rTo^{\omega_{4n-1}}&S^{4n-1}.\\
\end{diagram}
\item The composite
$$
P^{8n-2}(2)\rTo^{\delta_{4n}}P^{4n}(2)\rTo^{\Sigma^2}\Omega^2P^{4n+2}(2)
$$
is null homotopic.
\item The composite
$$
P^{8n-1}(2)\rTo^{\eta\wedge\id} P^{8n-2}(2)\rTo^{\delta_{4n}}P^{4n}(2)
$$
is null homotopic.
\end{enumerate}
\end{lem}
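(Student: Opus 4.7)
\emph{Step 1 (construction and property (1)).} The plan is to produce $\delta_{4n}$ by an obstruction-theoretic extension of $i\circ\omega_{4n-1}\colon S^{8n-3}\to P^{4n}(2)$ over $P^{8n-2}(2)=S^{8n-3}\cup_{[2]}e^{8n-2}$, then verify (2) and (3) by reducing each to a known vanishing, exploiting the torsor freedom in the extension to reconcile the requirements. The obstruction to the extension is $2\cdot(i\circ\omega_{4n-1})\in\pi_{8n-3}(P^{4n}(2))$. I will invoke the James distributivity formula
$$
[2]_{S^{4n-1}}\circ\alpha=2\alpha+[\iota_{4n-1},\iota_{4n-1}]\circ H_2(\alpha)
$$
with $\alpha=\omega_{4n-1}$. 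Since the James--Hopf invariant satisfies $H_2([\iota_m,\iota_m])=(1+(-1)^m)\iota_{2m-1}$, it vanishes for $m=4n-1$ odd. Hence $[2]\circ\omega_{4n-1}=2\omega_{4n-1}$, and composing with $i$, combined with $i\circ[2]\simeq*$ from the defining cofibre sequence of $P^{4n}(2)$, yields $2\cdot(i\omega_{4n-1})\simeq*$. Thus $\delta_{4n}$ exists, and the set of all extensions is a torsor over $\pi_{8n-2}(P^{4n}(2))$, acting through the pinch to the top cell of $P^{8n-2}(2)$.

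\emph{Step 2 (property (3)).} The restriction of $\delta_{4n}\circ(\eta\wedge\id)\colon P^{8n-1}(2)\to P^{4n}(2)$ to the bottom cell $S^{8n-2}$ is $i\circ\omega_{4n-1}\circ\eta_{8n-3}$. Using the standard identity that relates composition with a suspended $\eta_{8n-3}=\Sigma^{8n-5}\eta_2$ to a Whitehead bracket, one obtains
$$
\omega_{4n-1}\circ\eta_{8n-3}=\pm[\iota_{4n-1},\eta_{4n-1}]\in\pi_{8n-2}(S^{4n-1}),
$$
which vanishes by Mahowald's theorem. So the bottom-cell restriction is null, and the full composite $\delta_{4n}\circ(\eta\wedge\id)$ is determined by a residual class in $\pi_{8n-1}(P^{4n}(2))$ arising from the top cell of $P^{8n-1}(2)$; this class can be absorbed into the torsor freedom from Step 1.

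\emph{Step 3 (property (2)).} The bottom-cell restriction of $\Sigma^2\circ\delta_{4n}\colon P^{8n-2}(2)\to\Omega^2P^{4n+2}(2)$ is the adjoint of $i\circ\Sigma^2\omega_{4n-1}$, which is null because suspensions of Whitehead products vanish. The adjoint map $P^{8n}(2)\to P^{4n+2}(2)$ therefore factors up to homotopy through the top cell and represents a class in the Toda bracket $\langle i,[2],\Sigma^2\omega_{4n-1}\rangle\subset\pi_{8n}(P^{4n+2}(2))$, well-defined modulo $2\pi_{8n}(P^{4n+2}(2))$. I will show this bracket contains zero by producing an explicit null-homotopy, again drawing on $[\iota_{4n-1},\eta_{4n-1}]=0$ together with EHP and Freudenthal estimates in the metastable range, while coordinating the choice of $\delta_{4n}$ so that (2) and (3) hold simultaneously.

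The principal obstacle is Step 3: a single extension $\delta_{4n}$ must realise both (2) and (3), so the Toda-bracket indeterminacy, the torsor freedom from Step 1, and Mahowald's vanishing must all be tracked in tandem. The relation $[\iota_{4n-1},\eta_{4n-1}]=0$ is the key geometric ingredient that makes the required cancellations possible, as hinted in the paper's introduction.
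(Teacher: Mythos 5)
Your Step 1 is sound: the distributivity formula gives $2(i\circ\omega_{4n-1})=i\circ[2]\circ\omega_{4n-1}=0$ since $H_2(\omega_{4n-1})=0$ for $4n-1$ odd, so an extension $\delta_{4n}$ satisfying (1) exists, and the extensions form a torsor over the image of $\pi_{8n-2}(P^{4n}(2))$ under composition with the pinch map. The two bottom-cell computations are also correct: $\omega_{4n-1}\circ\eta_{8n-3}=[\iota_{4n-1},\eta_{4n-1}]=0$ by Mahowald, and $\Sigma\omega_{4n-1}=0$. But the actual content of the lemma is that a \emph{single} extension kills both composites in (2) and (3), and that is exactly what your argument does not deliver. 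Once the bottom-cell restrictions vanish, each composite is determined only as a coset of top-cell classes (in $\pi_{8n}(P^{4n+2}(2))$ for (2) and in $\pi_{8n-1}(P^{4n}(2))$ for (3)), and replacing $\delta_{4n}$ by $\delta_{4n}+\epsilon\circ\mathrm{pinch}$ with $\epsilon\in\pi_{8n-2}(P^{4n}(2))$ moves these residues by $\Sigma^2\epsilon$ and by $\epsilon\circ\eta$ respectively. To conclude, you would have to compute both homotopy groups in the relevant range, identify the two residual cosets, and exhibit one $\epsilon$ that simultaneously cancels both; the sentences "this class can be absorbed into the torsor freedom" and "I will show this bracket contains zero by producing an explicit null-homotopy" are precisely the assertions that need proof, and nothing in your write-up supplies them. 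As it stands the argument establishes only (1).

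The paper avoids this bookkeeping entirely by constructing $\delta_{4n}$ explicitly rather than by obstruction theory: it is the composite $P^{8n-2}(2)=S^{4n-1}\wedge P^{4n-1}(2)\hookrightarrow\RP^{\infty}_{4n-1}\wedge P^{4n-1}(2)\to\RP^{4n-2}\wedge P^{4n}(2)\to P^{4n}(2)$, built from the boundary map of stunted projective spaces and a map $k_{4n}$ coming from the model of the fibre of the stabilization $P^{4n}(2)\to Q(P^{4n}(2))$. With this choice, (1) is the classical identification of that boundary map with the Whitehead square; (2) is automatic because $\delta_{4n}$ factors through the fibre of stabilization, hence vanishes stably and therefore after double suspension for dimensional reasons; and (3) follows from the fact that $Sq^2$ acts nontrivially from $H^{8n-3}$ to $H^{8n-1}$ of $\Sigma^{4n-2}\RP^{\infty}_{4n-1}$, which kills $\eta$ into the bottom cell of that spectrum-level factor \emph{before} one ever maps down to $P^{4n}(2)$, so no residual top-cell class arises and no coordination problem occurs. (This $Sq^2$ computation is also where Mahowald's theorem enters the paper, in homological disguise.) If you wish to salvage your route, the minimum missing input is an explicit analysis of $\pi_{8n-1}(P^{4n}(2))$ and $\pi_{8n}(P^{4n+2}(2))$ together with the two cosets; the geometric construction is what lets the paper bypass that entirely.
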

\begin{proof}
Let $W^n_k$ be the homotopy of the homotopy fibre of the inclusion map $S^n\to \Omega^{k}S^{n+k}$, and let $W^n_k(2)$ be the homotopy fibre of the inclusion map $P^n(2)\to \Omega^kP^{n+k}(2)$. By Lemma~\ref{lemma3.2}, there is a homotopy commutative diagram,
\begin{diagram}
W^{4n}_{\infty}(2)&\rTo&P^{4n}(2)&\rTo&Q(P^{4n}(2))\\
\uTo&&\uTo>{k_{4n}}&&\uTo\\
\RP^{\infty}_{4n-1}\wedge P^{4n-1}(2)&\rTo^{\partial_{4n-2}\wedge\id}&\RP^{4n-2}\wedge P^{4n}(2)&\rTo&\RP^{\infty}\wedge P^{4n}(2)\\
\uInto&& \uInto&&\uInto\\
\Sigma^{4n-2}\RP^{\infty}_{4n-1}&\rTo^{\Sigma^{4n-2}\partial_{4n-2}}&\Sigma^{4n-1}\RP^{4n-2}&\rTo&\Sigma^{4n-1}\RP^{\infty}\\
\dTo&&\dTo>{j_{4n-1}}&&\dTo>{j'_{\infty}}\\
W^{4n-1}_{\infty}&\rTo&S^{4n-1}&\rTo&Q(S^{4n-1})\\
\end{diagram}
with a canonical morphism of fibre sequences from the bottom row to the top row for making a homotopy commutative diagram of cubic diagrams, where $\partial_k\colon \RP^{\infty}_{k+1}=\RP^{\infty}/\RP^k\to \Sigma \RP^k$ is the boundary map. Let $\delta_{4n}\colon P^{8n-2}(2)\to P^{4n}(2)$ be the composite
$$
S^{4n-1}\wedge P^{4n-1}(2)\rInto \RP^{\infty}_{4n-1}\wedge P^{4n-1}(2)\rTo^{\partial_{4n-2}\wedge\id} \RP^{4n-2}\wedge P^{4n}(2)\rTo^{k_{4n}}P^{4n}(2).
$$

From the above commutative diagram, $\delta_{4n}|\colon S^{8n-3}\to P^{4n}(2)$ is homotopic to the composite
$$
S^{8n-3}\rTo^{\omega_{4n-1}}S^{4n-1}\rInto P^{4n}(2).
$$
Moreover the composite $P^{8n-2}(2)\rTo^{\delta_{4n}}P^{4n}(2)\rTo Q(P^{4n}(2))$ is null homotopic by the construction. It follows that $\Sigma^2\delta_{4n}\colon P^{8n}(2)\to P^{4n+2}(2)$ is null homotopic by dimensional reasons.

Now we check condition (3) in the statement. Observe that the reduced mod $2$ homology of $\RP^\infty_{4n-1}$ has a basis $\{u^{k}\}$ with $k\geq 4n-1$. The Steenrod operation
$$
Sq^2(u^{4k-1})=Sq(u^{4k-4}\cdot u^3)=u^{4k-4}Sq^2(u^3)=u^{4k+1}.
$$
Thus
$$
Sq^2_*\colon H_{8n-1}(\Sigma^{4n-2}\RP^{\infty}_{4n-1})=\Z/2\longrightarrow H_{8n-3}(\Sigma^{4n-2}\RP^{\infty}_{4n-1})=\Z/2
$$
is an isomorphism. It follows that the composite
$$
S^{8n-2}\rTo^{\eta} S^{8n-3}\rInto \Sigma^{4n-2}\RP^{\infty}_{4n-1}
$$
is null homotopic. By smashing with mod $2$ Moore spaces,  the composite
$$
P^{8n-1}(2)\rTo^{\eta\wedge\id } P^{8n-2}(2)\rInto \Sigma^{4n-2}\RP^{\infty}_{4n-1}\wedge P^{4n-1}(2)
$$
is null homotopic. Condition (3) is satisfied and hence the result.

\end{proof}


\section{Proof of Theorem~\ref{theorem1.1}}\label{section5}

We use the notation $W^n_k(2)$ defined in the proof of Lemma~\ref{lemma4.3}. Consider the homotopy commutative diagram of fibre sequences
\begin{diagram}
\Omega W^{2n}_{\infty}&\rTo&E&\rTo&W^{2n}_{\infty}(2)&\rTo&W^{2n}_{\infty}\\
\dTo&&\dTo&&\dTo&&\dTo\\
\Omega S^{2n}&\rTo^{\partial}& F^{2n}\{2\}&\rTo& P^{2n}(2)&\rTo^{\mathrm{pinch}}& S^{2n}\\
\dTo&&\dTo&&\dTo&&\dTo\\
\Omega Q(S^{2n})&\rTo^2&Q(S^{2n-1})&\rTo& Q(P^{2n}(2))&\rTo&Q(S^{2n}).\\
\end{diagram}
For a space $X$, let $\{P^n(2),X\}=[P^n(2), Q(X)]$ denote the group of stable homotopy classes from $P^n(2)$ to $X$.

\begin{lem}\label{lemma5.1}
\begin{enumerate}
\item The stabilization $[P^{4n-2}(2), S^{2n}]\to\{P^{4n-2}(2),S^{2n}\}$ is an isomorphism.
\item The stablization $[P^{4n-2}(2),\Omega S^{2n}]\to [P^{4n-2}(2), \Omega Q(S^{2n})]$ is onto.
\item Let $4n\not=4,8$. Then the kernel of $[P^{8n-2}(2), S^{4n-1}]\to \{P^{8n-2}(2), S^{4n-1}\}$ is $Z/2$ generated by any map $\phi\colon P^{8n-2}(2)\to S^{4n-1}$ such that $\phi|_{S^{8n-3}}$ is the Whitehead square.
\end{enumerate}
\end{lem}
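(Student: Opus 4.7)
The plan is to apply $[-,X]$ to the Puppe cofiber sequence
\[
S^{m-1}\xrightarrow{2}S^{m-1}\xrightarrow{i}P^m(2)\xrightarrow{q}S^m\xrightarrow{2}S^m,
\]
obtaining the five-term exact sequence
\[
\pi_m(X)\xrightarrow{2}\pi_m(X)\xrightarrow{q^*}[P^m(2),X]\xrightarrow{i^*}\pi_{m-1}(X)\xrightarrow{2}\pi_{m-1}(X),
\]
and to compare it with the corresponding stable sequence via the stabilization map, using Freudenthal's theorem and the 2-local EHP sequence to control the vertical maps.

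For (1), take $m=4n-2$, $X=S^{2n}$. Since $4n-2\leq 2(2n)-2$, Freudenthal implies that both $\pi_{4n-2}(S^{2n})\to\pi^S_{2n-2}$ and $\pi_{4n-3}(S^{2n})\to\pi^S_{2n-3}$ are isomorphisms; hence all four outer vertical maps in the comparison diagram are isomorphisms, and the five-lemma yields the claim.

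For (2), the adjunction $[P^{4n-2}(2),\Omega S^{2n}]\cong[P^{4n-1}(2),S^{2n}]$, together with its stable analogue, reduces the task to showing that $[P^{4n-1}(2),S^{2n}]\to\{P^{4n-1}(2),S^{2n}\}$ is surjective. Taking $m=4n-1$, $X=S^{2n}$ in the setup above, Freudenthal now gives $\pi_{4n-2}(S^{2n})\to\pi^S_{2n-2}$ an isomorphism and $\pi_{4n-1}(S^{2n})\to\pi^S_{2n-1}$ only a surjection (one dimension past the iso range). The surjective form of the five-lemma forces the middle vertical map to be onto.

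For (3), take $m=8n-2$, $X=S^{4n-1}$, and set $A=\pi_{8n-2}(S^{4n-1})$, $B=\pi_{8n-3}(S^{4n-1})$, $A'=\pi^S_{4n-1}$, $B'=\pi^S_{4n-2}$. The five-term sequence compresses to
\[
0\to A/2A\xrightarrow{q^*}[P^{8n-2}(2),S^{4n-1}]\xrightarrow{i^*}B[2]\to 0,
\]
with $B[2]=\ker(2\colon B\to B)$, and analogously for the stable version. The 2-local EHP sequence now shows three things: first, $\ker(B\to B')$ equals the image of $P\colon\pi_{8n-1}(S^{8n-1})=\Z\to B$, namely $\Z/2\cdot\omega_{4n-1}$ with $\omega_{4n-1}=[\iota_{4n-1},\iota_{4n-1}]$, of order exactly $2$ for $4n\neq 4,8$ by Adams' Hopf-invariant-one theorem; second, the kernel of $A\to\pi_{8n-1}(S^{4n})$ is generated by $[\iota_{4n-1},\eta_{4n-1}]$, which vanishes by Mahowald; and third, the cokernel of $A\to\pi_{8n-1}(S^{4n})$ is the free $\Z$-summand generated by $[\iota_{4n},\iota_{4n}]$ (since $\mathrm{image}(H)=2\Z$), which dies in the next EHP step because $[\iota_{4n},\iota_{4n}]=P(\iota_{8n+1})$, after which all subsequent suspensions are Freudenthal isomorphisms. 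Together these imply that $\alpha\colon A\to A'$ is an isomorphism, so the snake lemma applied to the comparison of the two short exact sequences collapses to give $\ker\beta\xrightarrow{\cong}\Z/2\cdot\omega_{4n-1}$ via $i^*$. Since $2\omega_{4n-1}=0$, the map $\omega_{4n-1}$ extends over $P^{8n-2}(2)$; any such extension $\phi$ (adjusted by an element of $q^*(A/2A)$ if needed to lie in $\ker\beta$) restricts to $\omega_{4n-1}$ on the bottom cell and generates the kernel. The main obstacle is the proof that $\alpha$ is an isomorphism, which requires combining Mahowald's vanishing of $[\iota_{4n-1},\eta_{4n-1}]$ with the Hopf-invariant-one theorem controlling $\mathrm{image}(H)$ and with the precise EHP behavior of the Whitehead square.
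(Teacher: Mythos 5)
Your argument is correct, and for parts (1) and (2) it is essentially the paper's: the one-line justification there, that $S^{2n}$ is the $(4n-1)$-skeleton of $\Omega S^{2n+1}$, is exactly the Freudenthal/connectivity input you feed into the five-lemma. For part (3) you take a genuinely different route. The paper works with the fibration $W_2^{4n-1}\to S^{4n-1}\to \Omega^2 S^{4n+1}$, identifies $\sk_{8n-1}(W_2^{4n-1})=P^{8n-2}(2)$ (via the degree-$2$ composite through $H$ and $\Omega P$), and reads off the kernel of stabilization as the image of $[P^{8n-2}(2),P^{8n-2}(2)]=\Z/4$, which collapses to $\Z/2$ because the degree-$2$ self-map of the Moore space factors through $\eta$ and $\omega_{4n-1}\circ\eta=0$. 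You instead compare the unstable and stable Puppe sequences $0\to A/2A\to[P^{8n-2}(2),S^{4n-1}]\to B[2]\to 0$ via the EHP sequence and the snake lemma. The two arguments consume the same two external inputs at the same pressure points: Mahowald's $[\iota_{4n-1},\eta_{4n-1}]=0$ (to kill the potential extra $\Z/2$ coming from $\pi_{8n-2}$, resp.\ to show $A\to A'$ is injective) and Adams' Hopf-invariant-one theorem (to guarantee $\omega_{4n-1}$ has order exactly $2$ when $4n\neq 4,8$, so the kernel is $\Z/2$ rather than $0$). What the paper's version buys is a geometric identification of the kernel generator as the restriction of $W_2^{4n-1}\to S^{4n-1}$ to its $(8n-1)$-skeleton, which meshes with the construction of $\delta_{4n}$ in Lemma~\ref{lemma4.3} and its use in Theorem~\ref{theorem5.3}; your version is more self-contained bookkeeping with homotopy groups. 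One genuine merit of your write-up: you correctly observe that a map $\phi$ with $\phi|_{S^{8n-3}}=\omega_{4n-1}$ need not itself lie in the kernel of stabilization unless adjusted by an element of $q^*(A/2A)$ (and $A/2A\neq 0$ in general), so the word ``any'' in the statement should be read with that caveat; the paper glosses over this.
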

\begin{proof}
Assertions (1) and (2) follows immediately from the fact that $S^{2n}$ is the $(4n-1)$-skeleton of $\Omega S^{2n+1}$. For assertion (3), consider homotopy commutative diagram of fibre sequences
\begin{diagram}
\Omega^2 S^{8n-1}&\rEq&\Omega^2S^{8n-1}&&\\
\dTo&&\dTo>{P}&&\\
W_2^{4n-1}&\rTo&S^{4n-1}&\rTo^{\Sigma^2}&\Omega^2S^{4n+1}\\
\dTo&&\dTo>{E}&&\dEq\\
\Omega^3S^{8n+1}&\rTo^{\Omega P}&\Omega S^{4n}&\rTo^{\Omega E}& \Omega^2 S^{4n+1}\\
\dTo&&\dTo>{H}\\
\Omega S^{8n-1}&\rEq&\Omega S^{8n-1.}&&\\
\end{diagram}
Since the composite
$$
S^{8n-2}\rInto \Omega^3 S^{8n+1}\rTo^{\Omega P} \Omega S^{4n}\rTo^{H} \Omega S^{8n-1}
$$
is of degree $2$, we have
$$
\mathrm{sk}_{8n-1}(W_2^{4n-1})=P^{8n-2}(2).
$$
It follows that there is an exact sequence
$$
[P^{8n-2}(2),\Omega^3 S^{4n+1}]\to [P^{8n-2}(2), P^{8n-2}(2)]\to [P^{8n-2}(2), S^{4n-1}]\to \{P^{8n-2}(2), S^{4n-1}\}.
$$
By the proof of Lemma~\ref{lemma4.3}, $\omega_{4n-1}\circ\eta$ is null homotopic. Thus the composite
$$
P^{8n-2}(2)\rTo S^{8n-2}\rTo^{\eta} S^{8n-3}\rTo P^{8n-2}(2)\rTo S^{4n-1}
$$
is null homotopic, and so the image of $[P^{8n-2}(2), P^{8n-2}(2)]=\Z/4$ in $[P^{8n-4}(2), S^{4n-1}]$ is $\Z/2$. The proof is finished.

\end{proof}

\begin{lem}\label{lemma5.2}
There is a homotopy decomposition
$$
\Omega F^{2n}\{2\}\simeq \Omega S^{2n-1}\times \Omega S^{4n-2}\times \Omega P^{6n-3}(2)
$$
up to dimension $8n-8$.
\end{lem}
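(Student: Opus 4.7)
The plan is to build a product map
$$
a\colon \Omega S^{2n-1} \times \Omega S^{4n-2} \times \Omega P^{6n-3}(2) \longrightarrow \Omega F^{2n}\{2\}
$$
from three natural maps combined via the loop multiplication on the target, and then to verify that $a$ is an equivalence through dimension $8n-8$ by checking mod $2$ homology.

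For the first factor, the composite $S^{2n-1}\hookrightarrow P^{2n}(2)\to S^{2n}$ is null-homotopic, so the inclusion of the bottom cell lifts to $S^{2n-1}\to F^{2n}\{2\}$; its loop adjoint $\tilde\alpha\in\pi_{2n-2}(\Omega F^{2n}\{2\})$ James-extends to $a_1\colon \Omega S^{2n-1}\to \Omega F^{2n}\{2\}$. For the second factor, let $v\in\pi_{2n-1}(\Omega P^{2n}(2))$ be the loop adjoint of the top-cell inclusion of $P^{2n}(2)$. The Samelson product $[\tilde\alpha,v]$ maps to zero in $\pi_*(\Omega S^{2n})$ (because $\tilde\alpha$ does), so it lifts to $\tilde\beta\in\pi_{4n-3}(\Omega F^{2n}\{2\})$, whose James extension is $a_2\colon \Omega S^{4n-2}\to\Omega F^{2n}\{2\}$. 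For the third factor, I would form the iterated Samelson products $[\tilde\alpha,\tilde\beta]\in\pi_{6n-5}$ and $[v,\tilde\beta]\in\pi_{6n-4}$, which also lift to $\Omega F^{2n}\{2\}$ since $\tilde\beta$ already does; the mod-$2$ Moore-space relation linking $v$ and $\tilde\alpha$ should make these two classes assemble into a map $P^{6n-4}(2)\to \Omega F^{2n}\{2\}$, James-extending to $a_3\colon \Omega P^{6n-3}(2)\to \Omega F^{2n}\{2\}$.

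To verify that $a$ is an equivalence through dimension $8n-8$, I would compare mod $2$ homology. The target is analyzed through the loop fibration $\Omega F^{2n}\{2\}\to \Omega P^{2n}(2)\to \Omega S^{2n}$ with $H_*(\Omega P^{2n}(2);\F_2)=T(u,v)$ and $H_*(\Omega S^{2n};\F_2)=\F_2[v]$, the induced map sending $u\mapsto 0$ and $v\mapsto v$. An Eilenberg--Moore (equivalently, cotensor) computation identifies $H_*(\Omega F^{2n}\{2\};\F_2)$ through dimension $8n-8$ with $\F_2[x_{2n-2}]\otimes \F_2[y_{4n-3}]\otimes T(u_{6n-5},w_{6n-4})$, and by construction $a_*$ sends the generators of the source onto these classes. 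Since both sides are simply connected loop spaces, a mod $2$ homology equivalence in the stated range upgrades to a genuine equivalence of $(8n-8)$-skeleta in the $2$-local setting relevant to the rest of the paper.

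The main obstacle will be the construction of $a_3$: one must check that $[\tilde\alpha,\tilde\beta]$ and $[v,\tilde\beta]$ fit together with precisely the right $\beta_2$-Bockstein compatibility to factor through the Moore space $P^{6n-4}(2)$ rather than merely through its associated wedge $S^{6n-5}\vee S^{6n-4}$. This reduces to a Jacobi-identity computation in the Pontryagin algebra $T(u,v)$ of $\Omega P^{2n}(2)$, interacting with the mod-$2$ structure carried by $v$. The bound $8n-8$ is sharp for this approach because at weight four and above genuinely new iterated brackets appear that are not captured by a three-fold product on the left.
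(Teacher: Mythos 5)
There is a genuine gap, and it sits at the foundation of your construction: the class $v\in\pi_{2n-1}(\Omega P^{2n}(2))$ that you call ``the loop adjoint of the top-cell inclusion of $P^{2n}(2)$'' does not exist. A Moore space has no top-cell inclusion, and in fact $\pi_{2n}(P^{2n}(2))\cong\Z/2$, generated by $\eta$ composed with the bottom-cell inclusion, which has trivial Hurewicz image; hence the generator $v\in H_{2n-1}(\Omega P^{2n}(2);\F_2)$ is not spherical. Consequently neither $\tilde\beta=[\tilde\alpha,v]$ nor $[v,\tilde\beta]$ is defined, and $a_2$, $a_3$ are not constructed. If you instead read $v$ as the mod-$2$ homotopy class $P^{2n-1}(2)\to\Omega P^{2n}(2)$, then $[\tilde\alpha,v]$ is a map out of $S^{2n-2}\wedge P^{2n-1}(2)\simeq P^{4n-3}(2)$, and its James extension would contribute a factor $\Omega P^{4n-2}(2)$ rather than the required $\Omega S^{4n-2}$. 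That the degree-$(4n-3)$ generator of $H_*(\Omega F^{2n}\{2\};\F_2)$ (namely the image of $[u,v]$) is carried by a genuine sphere is exactly the nontrivial geometric input; the paper obtains it from the unit tangent bundle: $P^{2n}(2)$ is the $(4n-2)$-skeleton of $\tau(S^{2n})=V_{2,2n+1}$, the induced map of fibres $F^{2n}\{2\}\to S^{2n-1}$ admits a cross-section, so $\Omega F^{2n}\{2\}\simeq\Omega S^{2n-1}\times\Omega Y$ with $Y$ the $(4n-3)$-connected fibre of $P^{2n}(2)\to\tau(S^{2n})$, whose bottom cell $S^{4n-2}$ supplies the second factor.

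Your third factor has a further problem that you flag but do not resolve: assembling $[\tilde\alpha,\tilde\beta]$ and $[v,\tilde\beta]$ into a single map $P^{6n-4}(2)\to\Omega F^{2n}\{2\}$ is precisely the step that is not routine at the prime $2$, where mod-$2$ Samelson products lack a Jacobi identity, $P^a(2)\wedge P^b(2)$ is not a wedge of Moore spaces, and extensions over the top cell of a Moore space are ambiguous up to compositions with $\eta$. A computation in the Pontryagin ring $T(u,v)$ can confirm that the required homology classes are present, but it cannot by itself produce the map. The paper sidesteps this by taking the third factor from the functorial decomposition $\Omega\Sigma X\simeq\Omega\Sigma L_3(X)\times(\cdots)$ of $2$-local suspensions applied to $X=P^{2n-1}(2)$, which hands over the map $\Omega P^{6n-3}(2)\to\Omega Y$ with no Samelson-product bookkeeping. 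Your homology accounting (the Poincar\'e series of $\F_2[x_{2n-2}]\otimes\F_2[y_{4n-3}]\otimes T(u_{6n-5},w_{6n-4})$ agrees with that of $\Omega F^{2n}\{2\}$ through dimension $8n-8$) is correct and matches the paper's verification step, but the construction of a map realizing it must be replaced.
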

\begin{proof}
Consider the homotopy commutative diagram of fibre sequences
\begin{diagram}
\Omega S^{2n}&\rTo&S^{2n-1}&\rTo&\tau(S^{2n})&\rTo&S^{2n}\\
\uEq&&\uTo &&\uInto&&\uEq\\
\Omega S^{2n}&\rTo&F^{2n}\{2\}&\rTo& P^{2n}(2)&\rTo& S^{2n}\\
   & &\uTo&&\uTo & &\\
   &&Y&\rEq& Y,&&\\
\end{diagram}
where $\tau(S^{2n})=SO(2n+1)/SO(2n-1)=V_{2,2n+1}$ is the $2$-frame Stiefel manifold. Since the map $F^{2n}\{2\}\to S^{2n-1}$ admits a cross-section, there is a homotopy decomposition
$$
\Omega F^{2n}\{2\}\simeq \Omega Y\times \Omega S^{2n-1}.
$$
Let $f\colon \Omega S^{4n-2}\to \Omega Y$ be the extension of the inclusion of the bottom cell. Let $g\colon \Omega P^{6n-3}(2)=\Omega \Sigma L_3(P^{2n-1}(2))\to \Omega Y$ be the map in the functorial decomposition of $$\Omega \Sigma X\simeq \Omega \Sigma L_3(X)\times \mathrm{?}$$ for $2$-local spaces. Then the map
$$
\Omega S^{4n-2}\times \Omega P^{6n-3}(2)\rTo^{(f,g)}\Omega Y
$$
induces an isomorphism on homology up to dimension $8n-8$. The proof is finished.

\end{proof}

We restate Theorem~\ref{theorem1.1} as follows.

\begin{thm}\label{theorem5.3}
Let $n>1$ The power map $4\colon \Omega^2P^{4n}(2)\to \Omega^2P^{4n}(2)$ restricted to the skeleton $\sk_{4(4n-2)-1}(\Omega^2P^{4n}(2))$ is null homotopic.
\end{thm}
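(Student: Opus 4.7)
The strategy is to bridge the structural factorization through $\rho$ from Section~\ref{section3} with the map $\delta_{4n}$ supplied by Lemma~\ref{lemma4.3}, and then apply Lemma~\ref{lemma4.3}(3). First, I would apply Theorem~\ref{theorem2.3} with $X=P^{4n-2}(2)$; since $|X|=4n-2$ and $\Sigma^2 X=P^{4n}(2)$, this reduces the theorem to showing that
$$\Sigma Y \xrightarrow{\Sigma\tilde H_2}\Sigma D_2 \xrightarrow{\bar\sigma_2} P^{4n-1}(2)^{\wedge 2} \xrightarrow{2 S_2} \Omega P^{4n}(2)$$
is null homotopic, where $Y:=\sk_{4(4n-2)-1}(\Omega^2P^{4n}(2))$. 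Next, Lemma~\ref{lemma3.1} factors $[2]$ on $P^{4n-1}(2)^{\wedge 2}$ through $P^{8n-2}(2)\xrightarrow{\eta\wedge\id}P^{8n-3}(2)$, and diagram~(\ref{equation3.1}) combined with Lemma~\ref{lemma3.5} factors the resulting composite further through a map $\rho:P^{8n-3}(8)\to P^{8n-3}(2)$. Thus the task reduces to proving that $\psi\circ\rho\circ\nu\circ\phi\circ\Sigma\tilde H_2:\Sigma Y\to\Omega P^{4n}(2)$ is null, where $\psi:P^{8n-3}(2)\to\Omega P^{4n}(2)$ is the restriction of $S_2$, $\phi:\Sigma D_2\to C^{8n-2}$ is the pinch map of diagram~(\ref{equation3.1}), and $\nu:C^{8n-2}\to P^{8n-3}(8)$ is the left vertical of Lemma~\ref{lemma3.5}.

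I would then pass to double adjoints, so the obstruction becomes a map $\tilde F:\Sigma^2 Y\to P^{4n}(2)$. The adjoint $\hat\psi:P^{8n-2}(2)\to P^{4n}(2)$ of $\psi$ equals $W_2\circ\Sigma i$, where $i:P^{8n-3}(2)\hookrightarrow P^{4n-1}(2)^{\wedge 2}$ is the inclusion and $W_2$ is the Whitehead product, so $\hat\psi|_{S^{8n-3}}$ equals the Whitehead square $\omega_{4n-1}$. By Lemma~\ref{lemma4.3}(1), $\delta_{4n}|_{S^{8n-3}}=\omega_{4n-1}$ as well, so the difference $\hat\psi-\delta_{4n}$ vanishes on $S^{8n-3}$ and therefore factors as $\alpha\circ q$ for some $\alpha\in\pi_{8n-2}(P^{4n}(2))$, where $q:P^{8n-2}(2)\to S^{8n-2}$ is the pinch.

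The critical observation is that the map $\rho$, as constructed in the proof of Theorem~\ref{theorem3.6}, induces degree $[4]$ on the top cell, so $q\circ\Sigma\rho=[4]\circ q_8$ with $q_8:P^{8n-2}(8)\to S^{8n-2}$ the pinch. Since $P^{4n}(2)$ has suspension exponent $4$, we get $4\alpha=0$ in $\pi_{8n-2}(P^{4n}(2))$, hence $\alpha\circ q\circ\Sigma\rho=0$; this yields the crucial identity $\hat\psi\circ\Sigma\rho=\delta_{4n}\circ\Sigma\rho$. On the other hand, suspending the commutative square of Lemma~\ref{lemma3.5} yields $\Sigma\rho\circ\Sigma\nu=(\eta\wedge\id)\circ\Sigma p_3$. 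Combining these gives
$$\tilde F=\hat\psi\circ\Sigma\rho\circ\Sigma\nu\circ\Sigma\phi\circ\Sigma^2\tilde H_2=\delta_{4n}\circ(\eta\wedge\id)\circ\Sigma p_3\circ\Sigma\phi\circ\Sigma^2\tilde H_2,$$
which is null by Lemma~\ref{lemma4.3}(3).

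The main obstacle is identifying the Samelson-product-based map $\hat\psi$ with $\delta_{4n}$ after post-composing with $\Sigma\rho$: the two maps agree only on the bottom cell $S^{8n-3}$ in general, and one must verify that their difference is absorbed by the $[4]$ present in the top-cell behaviour of $\rho$ together with the exponent $4$ of $P^{4n}(2)$. Once this bridge is in place, the suspension of Lemma~\ref{lemma3.5} reassembles the composite into a form in which Lemma~\ref{lemma4.3}(3) immediately concludes the argument.
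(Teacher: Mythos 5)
Your overall architecture is close to the paper's: reduce via Theorem~\ref{theorem2.3}, Lemma~\ref{lemma3.1}, diagram~(\ref{equation3.1}) and Lemma~\ref{lemma3.5} to a composite through $\rho$, bring in $\delta_{4n}$ from Lemma~\ref{lemma4.3} to absorb the Whitehead square on the bottom cell, and finish with Lemma~\ref{lemma4.3}(3) for the $\delta$-part and the degree-$[4]$ behaviour of $\rho$ on the top cell for the rest. However, there is a genuine gap at the step you yourself flag as critical: you write off the top-cell contribution by claiming that $4\alpha=0$ in $\pi_{8n-2}(P^{4n}(2))$ ``since $P^{4n}(2)$ has suspension exponent $4$.'' The suspension exponent gives $[4]_{P^{4n}(2)}\circ\alpha=0$, whereas what you need is $\alpha\circ[4]_{S^{8n-2}}=4\alpha=0$; these differ precisely by Hopf-invariant/Whitehead-product correction terms (this is the content of the result of Cohen quoted in the proof of Lemma~\ref{lemma4.1}), and the failure of this implication is exactly why $\Z/8$-summands appear in $\pi_{2m-2}(P^{m}(2))$ for $m\equiv 2,3\bmod 4$ despite the suspension exponent being $4$ for all $m$. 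Indeed, your argument as written never uses the congruence $n\equiv 0\bmod 4$ at this step, yet this is where the hypothesis must do its work.

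The paper closes exactly this gap with machinery you omit: Lemmas~\ref{lemma5.1} and~\ref{lemma5.2} analyze the kernel of the stabilization $[P^{8n-2}(2),P^{4n}(2)]\to\{P^{8n-2}(2),P^{4n}(2)\}$ via the fibration $F^{4n}\{2\}\to P^{4n}(2)\to S^{4n}$ and the splitting of $\Omega F^{4n}\{2\}$ in the relevant range, showing that $[S_2|]$ (which is stably trivial) must equal $[\bar\lambda_{4n}]$, $[\delta_{4n}]$ or $[\bar\lambda_{4n}+\delta_{4n}]$. Your class $\alpha$ is thereby identified with (a representative of) $\lambda_{4n}$, and the statement $4\alpha=0$ is then the theorem of Cohen--Wu that this particular class has order exactly $4$ when the dimension is $\equiv 0\bmod 4$ --- a nontrivial computed input, not a formal consequence of the co-$H$ exponent. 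To repair your proof you would need either to import Lemmas~\ref{lemma5.1} and~\ref{lemma5.2} together with the Cohen--Wu order-$4$ result to pin down $\alpha$, or to cite an independent computation that $\pi_{8n-2}(P^{4n}(2))$ has exponent $4$; the latter is essentially equivalent to what is being proved and is not available a priori.
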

\begin{proof}
If the Whitehead square $\omega_{4n-1}$ is divisible by $2$, we have proved the assertion in Theorem~\ref{theorem3.6}.  Now we assume that $\omega_{4n-1}$ is not divisible by $2$. Similar to the situation in the proof of Theorem~\ref{theorem3.6},  it suffices to prove that the composite
\begin{equation}\label{equation5.1}
\Sigma D_2\rTo^{\bar\sigma_2} (P^{4n-1}(2)^{\wedge 2}\rTo^{\mathrm{pinch}}P^{8n-2}(2)\rTo^{\eta\wedge\id} P^{8n-3}(2)\rTo^{S_2|}\Omega P^{4n}(2)
\end{equation}
is null homotopic. Our proof is given by controlling  the map $$S_2|\colon P^{8n-1}(2)\to \Omega P^{4n}(2).$$

By Lemma~\ref{lemma5.2}, $\Omega F^{4n}\{2\}\simeq \Omega S^{4n-1}\times \Omega S^{8n-2}$ up to dimension $15n-5$. Thus
$$
[P^{8n-2}(2), F^{4n}\{2\}]\cong [P^{8n-2}(2), S^{4n-1}]\oplus [P^{8n-2}(2), S^{8n-2}]=[P^{8n-2}(2),S^{4n-1}]\oplus\Z/2.
$$
By (3) of Lemma~\ref{lemma3.4}, there is an exact sequence
$$
\Z/2\longrightarrow [P^{8n-2}(2), S^{4n-1}]\longrightarrow \{P^{8n-2}(2), S^{4n-1}\}.
$$
By assertions (1) and (2) of Lemma~\ref{lemma5.1},
$\mathrm{Ker}([P^{8n-2}(2), P^{4n}(2)]\to \{P^{8n-2}(2), P^{4n}(2)\})
$
is contained in
$$
\mathrm{Im}(\Z/2\oplus\Z/2\to [P^{8n-2}(2), F^{4n}\{2\}]\to [P^{8n-2}(2), P^{4n}(2)].
$$
Let $\bar \lambda_{4n}\colon P^{8n-2}(2)\to P^{4n}(2)$ be the composite
\begin{equation}\label{equation5.2}
P^{8n-2}(2)\rTo^{pinch} S^{8n-2}\rTo^{\lambda_{4n}} F^{4n}\{2\}\rTo P^{4n}(2),
\end{equation}
where $\lambda_{4n}\colon S^{8n-2}\to F^{4n}\{2\}$ inducing an isomorphism on $H_{8n-2}$. Since $\bar \lambda_{4n}|_{S^{8n-3}}$ is trivial but $\delta_{4n}|_{8n-3}$ is essential (because $\omega_{4n-1}$ is not divisible by $2$), $[\bar\lambda_{4n}]\not=[\delta_{4n}]$. Thus the elements
$\{[\bar\lambda_{4n}], [\delta_{4n}]\}$ generates a subgroup of $[P^{8n-2}(2), P^{4n}(2)]$ containing $\mathrm{Ker}([P^{8n-2}(2), P^{4n}(2)]\to \{P^{8n-2}(2), P^{4n}(2)\})
$. Since $$[S_2|]\in \mathrm{Ker}([P^{8n-2}(2), P^{4n}(2)]\to \{P^{8n-2}(2), P^{4n}(2)\}),
$$ we have $[S_2|]=[\bar\lambda_{4n}]$, $[\delta_{4n}]$ or $[\bar\lambda_{4n}+\delta_{4n}]$.

By Lemma~\ref{lemma4.3},
$$
\delta_{4n}\circ(\eta\wedge\id)\colon P^{8n-1}(2)\longrightarrow P^{4n}(2)
$$
is null homotopic.

Following  the lines in the proof of Theorem~\ref{theorem3.6}, with using the properties that   $\bar \lambda_{4n}$ factors through $S^{8n-3}$ by ~(\ref{equation5.2}) and any map $S^{8n-3}\to \Omega P^{4n}(2)$ having nontrivial Hurewicz image is of order $4$~\cite{Cohen-Wu}, the composite ~(\ref{equation5.1})
is null homotopic if $S_2|$ is replaced  by $\bar\lambda'_{4n}\colon P^{8n-3}(2)\to \Omega P^{4n}(2)$.  The proof is finished.
\end{proof}

\end{document}